\newtheorem{theorem}{Theorem}[section]
\newtheorem{lemma}[theorem]{Lemma}
\newtheorem{corollary}[theorem]{Corollary}
\newtheorem{proposition}[theorem]{Proposition}
\newtheorem{definition}{Definition}[section]
\newtheorem{remark}[theorem]{Remark}
\newtheorem{example}[theorem]{Example}
\numberwithin{equation}{section}
\newcommand{\nUparrow}{\Uparrow\hspace{-.24cm}\diagup\hspace{.31cm}}
\newcommand{\nDownarrow}{\Downarrow\hspace{-.24cm}\diagup\hspace{.31cm}}
\def\R{\mathbb R}
\def\N{\mathbb N}
\def\Z{\mathbb Z}
\def\int{{\rm int\;}}
\def\eps{\varepsilon}
\def\PSL{{\rm PSL}}
\def\sus{{\rm sus}}
\newcommand{\SL}{{\rm SL}}
\newcommand{\fix}{{\rm fix}}
\def\diam{{\rm diam\,}}
\begin{document}

	\title{Katok-Hasselblatt-kinematic expansive flows}
	\author{Huynh Minh Hien\\
	Department of Mathematics and Statistics,\\
		{Quy Nhon University},\\ {170 An Duong Vuong}, {Quy Nhon},  {Vietnam}\\
	huynhminhhien@qnu.edu.vn} \date{}

\maketitle


	\begin{abstract} In this paper we introduce a new notion of expansive flows, which is 
		the combination of expansivity in the sense of Katok and Hasselblatt and kinematic expansivity,
		named KH-kinematic expansivity. We  present  new properties of several variations of expansivity.
		A new hierarchy of expansive flows is given.
	\end{abstract}

\textbf{Keywords}: {KH-kinematic expansivity, Expansive flows, Fixed points.}
	
\textbf{MSC}: {37B05}, {37C10, 37C25}.
	
\maketitle

\section{Introduction}

The concept of expansivity plays an important role in the study of
discrete and continuous dynamical systems. 
Expansive flows have been studying for almost a half century.
In 1972, Bowen and Walters \cite{bw} gave a definition of expansivity for flows  that is called `expansive in the sense of Bowen and Walters' (or shortly `BW-expansive'). Since then, 
there have been several variations of expansive flows introduced, depending on the kind of reparametrizations.
BW-expansivity is so-called $C$-expansivity (\cite{keynes})
since the reparametrizations are continuous functions in the set denoted by $C$.  
$K$-expansivity means that reparametrizations are in a set $K$ consisting of increasing homeomorphisms (\cite{komuro}). 
The definition of $K$-expansive flow is the same to that of expansive flows given by Flinn in his Ph.D thesis \cite{flinn}. 
Bowen and Walters \cite{bw} first showed  that for flows without fixed points,  $C$-expansivity and $K$-expansivity are equivalent. 
Then Oka \cite{oka90} proved that in general (i.e. with fixed points presented), the definitions of $C$-expansive and $K$-expansive flows are equivalent.
The class of $C$-expansive flows includes Anosov flows and suspensions of expansive homeomorphisms. 
In 1984, Komuro \cite{komuro} introduced the notion of $K^*$-expansive flows 
to study the geometric Lorenz attractor, which is not $K$-expansive. Then Oka \cite{oka90}
again showed that in the case of fixed-point-free flows,  $K$-expansivity is equivalent to $K^*$-expansivity.
In this paper, we prove this property for flows with open sets of fixed points. 

In 1984, Gura \cite{gura} introduced the term  `separating' for flows in order to study horocycle flows. The author showed that
the horocycle flow on compact surfaces with negative curvature is separating together with its time changes
(so-called strong separating). Recently, Huynh \cite{hien20}  proved
that the horocycle flow on compact surfaces of constant negative curvature is strong kinematic, which
is stronger than being strong separating.  
In 2016, a collection of new notions of expansive flows was presented by Artigue \cite{artigue16}, including
geometric expansivity, kinematic expansivity, geometric separation,
strong kinematic expansivity, strong separation,... A hierarchy of expansive flows is given 
with many counterexamples to analyse the relations of expansive properties.

Back in  1995, in a well-known book by Katok and Hasselblatt \cite{kh},
the authors discovered a new definition of expansive flows
by considering reparameterization in a particular way; this expansivity
is later called `KH-expansivity'.   Artigue \cite{artigue18}
then showed that a flow is KH-expansive 
if and only if it is separating and the set of fixed points is open.
Huynh \cite{hien19} proved that the horocycle flow on compact surfaces of constant negative curvature 
is KH-expansive. 
Roughly speaking,  a flow is called  KH-expansive if the orbit of a point
and the  orbit of another point, which is reparameterizated, are close enough forever then 
these points must be in the same orbit. It is natural to require
these points belong to an orbit segment with small time and this  
motivates to consider a new variation of expansivity. 
In this paper we introduce a new concept of expansive flows, 
which is the combination of KH-expansivity and kinematic expansivity, named \textit{Katok-Hasselblatt-kinematic expansivity} or \textit{KH-kinematic expansivity} for short.
The class of KH-kinematic expansive flows consists of kinematic expansive flows with open sets of fixed points 
but does not admit BW-expansivity. 
One example of KH-kinematic expansive flows is the horocycle flow on compact surfaces with constant negative curvature. 
Since KH-expansivity and KH-kinematic expansivity are not invariant properties under time change of flows, 
it is necessary to consider the notions of strong KH-expansivity and strong KH-kinematic expansivity. 
Hierarchy of expansive flows and counterexamples are given to analyse the relationships of the classes of expansive and separating flows\footnote{Expansive and separating flows are sometimes called in common \textit{expansive flows}.}. This may be seen as a supplement of hierarchy of expansive  flows provided by Artigue \cite{artigue16}. We also show that if the set of fixed points is open, 
then $C$-expansivity, $K$-expansivity and $K^*$-expansivity are equivalent (Theorem \ref{KK*}).

The paper is organized as follows. In Section \ref{s2}, we present some important results
of fixed points sets of flows which will be used in this paper. A main result of this section is 
that any suspension flow has no fixed points. Section \ref{s3} defines and states basic 
properties of expansive and separating flows in the versions of $C$-expansive, $K$-expansive, $K^*$-expansive, geometric expansive, kinematic expansive, strong
kinematic expansive, KH-expansive,  $C$-separating, geometric separating, strong separating and separating flows.
We show that a flow with open fixed points set is $K$-expansive if and only if it is $K^*$-expansive.
In Section \ref{s4} we introduce the definition and provide equivalent properties of KH-kinematic expansivity.
Strong KH-kinematic expansivity, KH-positive kinematic expansivity are also discovered.
Finally, we provide a hierarchy of expansive flows with counterexamples to analyse relations of concepts about expansive and separating flows.

Throughout the present paper, we denote by $C$ the set of continuous functions $s:\R\to\R$ with $s(0)=0$
and by $K$ the set of increasing homeomorphisms in $C$. We always assume that
$\phi:\R\times X\to X$ is a continuous flow on a given compact metric space $(X,d)$.

\section{Fixed points of flows}\label{s2}

The fixed points set of a flow plays an important role in forming property of the flow. 
This section is devoted to presenting  some important properties which will be used in the next sections.

\subsection{Fixed points and periodic points}

\begin{definition}\rm Let $\phi:X\to X$ be a flow. 
	
	(i) A point $x\in X$ is called a \textit{fixed point} of $\phi$ if $\phi_t(x)=x$ for all $t\in\R$.
	The set of fixed points of $\phi$ is denoted by $\fix(\phi)$. 
	
	(ii) A point $x\in X$ is called a \textit{periodic point} of $\phi$ if there exists $T>0$ such that
	$\phi_T(x)=x$ and $\phi_t(x)\ne x$ for some $t\in\R$. Such a  $T$ is called a period of $x$. 
\end{definition}
A fixed point is so-called a singular point. A point which is not a singular point is called a regular point. 
\begin{example}\rm
	Let us consider a flow $\phi$ on $\R$ defined by $\phi_t(x)=xe^t$ for all $t,x\in\R$.
	The point 0 is the unique fixed point of $\phi$. 
\end{example}
\begin{proposition}\label{fspn} Let $\phi$ be a continuous flow on a compact metric space $(X,d)$. 
	If $\fix(\phi)$ is open, then $\phi$ does not have periodic points with arbitrarily small periods.
\end{proposition}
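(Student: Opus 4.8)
The plan is to argue by contradiction. Suppose, contrary to the claim, that $\phi$ admits periodic points with arbitrarily small periods; concretely, choose a sequence of periodic points $x_n$ with periods $T_n>0$ satisfying $T_n\to 0$. Since $X$ is compact, after passing to a subsequence I may assume that $x_n\to x$ for some $x\in X$. The strategy is to show that this limit point $x$ is forced to be a fixed point, and then to derive a contradiction from the openness of $\fix(\phi)$: since $x_n\to x$ and $\fix(\phi)$ is an open neighborhood of $x$, eventually $x_n\in\fix(\phi)$, which is impossible because a periodic point is by definition not fixed.

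The heart of the argument is the step showing $x\in\fix(\phi)$, and this is where I expect the only real work to lie. Fix an arbitrary $t\in\R$. For each $n$ I would write $t=k_nT_n+r_n$ with $k_n\in\Z$ and $0\le r_n<T_n$, so that periodicity gives
\[
\phi_t(x_n)=\phi_{r_n}\bigl(\phi_{k_nT_n}(x_n)\bigr)=\phi_{r_n}(x_n).
\]
Because $0\le r_n<T_n\to 0$, we have $r_n\to 0$. The joint continuity of the flow map $\phi:\R\times X\to X$ then yields
\[
\phi_{r_n}(x_n)\longrightarrow \phi_0(x)=x,
\]
while continuity in the space variable gives $\phi_t(x_n)\to\phi_t(x)$. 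Comparing the two limits forces $\phi_t(x)=x$. Since $t\in\R$ was arbitrary, $x\in\fix(\phi)$.

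With $x\in\fix(\phi)$ established, the conclusion follows immediately: openness of $\fix(\phi)$ provides a neighborhood $U\subseteq\fix(\phi)$ of $x$, convergence $x_n\to x$ places $x_n\in U$ for all large $n$, and hence $x_n\in\fix(\phi)$, contradicting the fact that each $x_n$ is periodic (so $\phi_s(x_n)\ne x_n$ for some $s$). I expect the modular reduction together with the careful use of \emph{joint} continuity in $(t,x)$ to be the main obstacle, since it is precisely the uniformity coming from $r_n\to 0$ and $x_n\to x$ simultaneously that collapses the small periodic orbits onto the fixed point; everything else is a routine compactness-and-openness bookkeeping argument.
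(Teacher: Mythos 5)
Your proof is correct and follows essentially the same route as the paper's: extract a convergent subsequence of periodic points whose periods tend to zero, show the limit is a fixed point by approximating an arbitrary time $t$ with integer multiples of the periods, and derive a contradiction from the openness of $\fix(\phi)$. The only cosmetic differences are that your modular reduction $t=k_nT_n+r_n$ cleanly avoids the paper's case split (whether or not $t$ is an exact multiple of some $t_n$), and you invoke openness by forcing the $x_n$ eventually into $\fix(\phi)$, whereas the paper uses the equivalent fact that $X\setminus\fix(\phi)$ is closed (hence compact), so the limit point stays outside $\fix(\phi)$ while being fixed.
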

\begin{proof} 	Suppose in contrary that $\phi$ has arbitrarily small periods. 
	Let  $x_n\in \widetilde X=X\setminus \fix(\phi)$ and $t_n\to 0^+$ be such that $\phi_{t_n}(x_n)=x_n$ 
	for $n\in\N$. By hypothesis,  $\widetilde X$ is compact and we may assume that $x_n\to x_0$ in $\widetilde X$ as $n\to\infty$. 
	We claim that $x_0$ is a fixed point. For, fix $t\in\R$. 
	If there are infinitely many $n\in\N$ such that $t=j_n t_n$ for some $j_n\in\Z$, then
	$\phi_t(x_n)=\phi_{j_n t_n}(x_n)=\phi_{t_n}(x_n)=x_n$. Passing to the limit $n\to\infty$ 
	along a subsequence, it follows that $\phi_t(x_0)=x_0$. Hence it is no loss of generality 
	to assume that $t\neq j t_n$ for all $n\in\N$ and $j\in\Z$. For each $n\in \N$, there exists a unique $j_n\in\Z$ such that
	$j_n-1<t/t_n<j_n$. Then $0<j_n t_n-t<t_n$, and thus $j_n t_n\to t$ as $n\to\infty$, owing to $t_n\to 0$ as $n\to\infty$.
	As a result, $\phi_{j_n t_n}(x_n)=\phi_{t_n}(x_n)=x_n$
	yields $\phi_t(x_0)=x_0$ in the limit $n\to\infty$. 
	We deduce that $x_0\in\widetilde{X}$ is a fixed point, which is impossible. 
\end{proof}

\begin{remark}\rm 
	Note that the converse of the above lemma is not true. As we will see in Proposition \ref{sfl} below, a separating flow does not have
	periodic points with arbitrarily small periods, but the set of fixed points may not be open; see Example \ref{khex2}.
\end{remark}
The proof of Proposition \ref{fspn} has shown the following property.

\begin{corollary}
	If a continuous flow  on a compact metric space has periodic orbits of arbitrarily 
	small periods, then the flow must have a fixed point. 
\end{corollary}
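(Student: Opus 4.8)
The plan is to reuse, almost verbatim, the limiting argument already carried out in the proof of Proposition \ref{fspn}, observing that the hypothesis ``$\fix(\phi)$ is open'' was only used there to manufacture a contradiction, whereas the core construction actually produces a genuine fixed point. The difference is that here I do not assume anything about $\fix(\phi)$; I simply want to exhibit one fixed point, using only compactness of $X$.

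First I would invoke the hypothesis to select, for each $n\in\N$, a periodic point $x_n\in X$ with period $t_n$ such that $t_n\to 0^+$. Since $(X,d)$ is compact, I can pass to a subsequence and assume $x_n\to x_0$ for some $x_0\in X$. The claim, exactly as in Proposition \ref{fspn}, is that $x_0\in\fix(\phi)$. To see this, I fix an arbitrary $t\in\R$ and split into two cases. If there are infinitely many $n$ with $t=j_n t_n$ for some $j_n\in\Z$, then periodicity gives $\phi_t(x_n)=\phi_{j_n t_n}(x_n)=x_n$, and continuity of $\phi$ yields $\phi_t(x_0)=x_0$ upon passing to the limit along that subsequence. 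Otherwise I may assume $t\ne j t_n$ for all $n$ and $j$; for each $n$ I choose the unique $j_n\in\Z$ with $j_n-1<t/t_n<j_n$, so that $0<j_n t_n - t<t_n$, whence $j_n t_n\to t$ because $t_n\to 0$. Then $\phi_{j_n t_n}(x_n)=x_n$ together with continuity of $\phi$ (in both variables, evaluated along $j_n t_n\to t$ and $x_n\to x_0$) forces $\phi_t(x_0)=x_0$ in the limit.

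Since $t\in\R$ was arbitrary, this shows $\phi_t(x_0)=x_0$ for all $t$, i.e. $x_0$ is a fixed point, and therefore $\fix(\phi)\ne\varnothing$, which is the desired conclusion.

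There is essentially no serious obstacle here, since the mechanism is identical to that of Proposition \ref{fspn}; the only point requiring care is the passage to the limit in the second case, where one must use joint continuity of the flow to handle the simultaneous convergences $j_n t_n\to t$ and $x_n\to x_0$, rather than continuity in the time variable alone. I would also make explicit that, unlike in Proposition \ref{fspn}, no contradiction is sought: the limit point $x_0$ is allowed to lie anywhere in $X$, and the argument simply certifies that it is fixed, so the openness of $\fix(\phi)$ plays no role.
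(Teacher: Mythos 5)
Your proof is correct and follows essentially the same route as the paper: the paper proves this corollary simply by remarking that the limiting argument in the proof of Proposition \ref{fspn} already constructs the fixed point, which is exactly what you spell out (including the correct observation that openness of $\fix(\phi)$ and the contradiction are not needed, only compactness of $X$ and joint continuity of the flow).
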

The following result was rigorously proved in \cite[Lemma 2.1]{artigue18}. 
\begin{lemma}\label{equilm} 
	The following assertions are equivalent for a continuous flow $\phi$ on a compact metric space.
	
	(i) ${\rm fix}(\phi)$ is an open set.
	
	(ii) There is $T_*>0$ such that for all $T\in (0,T_*)$ there is $\xi>0$ such that
	$d(\phi_T(x),x)>\xi$ for all $x\notin {\rm fix}(\phi)$. 
\end{lemma}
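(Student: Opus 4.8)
The plan is to establish the two implications separately; the forward direction (i) $\Rightarrow$ (ii) is where the real content lies, while (ii) $\Rightarrow$ (i) follows by a short continuity argument in contrapositive form.

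For (i) $\Rightarrow$ (ii), I would first observe that since $\fix(\phi)$ is open, its complement $\widetilde X = X\setminus\fix(\phi)$ is closed in the compact space $X$, hence compact (if $\widetilde X$ is empty, (ii) holds vacuously, so I assume it is nonempty). The crucial input is Proposition \ref{fspn}: openness of $\fix(\phi)$ forbids periodic points with arbitrarily small periods. Concretely, consider the set of realized periods $P=\{T>0:\phi_T(x)=x \text{ for some } x\in\widetilde X\}$. I would argue that $\inf P>0$ (with the convention $\inf\emptyset=+\infty$): otherwise a sequence $T_n\in P$ with $T_n\to 0^+$ together with witnesses $x_n\in\widetilde X$ would supply non-fixed points of arbitrarily small period, contradicting the proposition. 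Then I fix any $T_*>0$ with $T_*\le\inf P$.

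Next, for each fixed $T\in(0,T_*)$ no point of $\widetilde X$ is periodic of period $T$, so $\phi_T(x)\ne x$ for every $x\in\widetilde X$; equivalently, the continuous function $x\mapsto d(\phi_T(x),x)$ is strictly positive on $\widetilde X$. As a continuous positive function on a nonempty compact set it attains a positive minimum $m=\min_{x\in\widetilde X}d(\phi_T(x),x)>0$, and taking $\xi=m/2$ gives $d(\phi_T(x),x)>\xi$ for all $x\notin\fix(\phi)$, which is exactly (ii). For the converse (ii) $\Rightarrow$ (i), I would argue by contraposition: if $\fix(\phi)$ is not open, there is a fixed point $x_0$ and a sequence $x_n\to x_0$ with $x_n\notin\fix(\phi)$. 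Fixing any $T>0$, continuity of the flow and of $d$ yields $d(\phi_T(x_n),x_n)\to d(\phi_T(x_0),x_0)=d(x_0,x_0)=0$ because $x_0$ is fixed. Hence for every $T>0$ and every $\xi>0$ the inequality $d(\phi_T(x),x)>\xi$ fails for large $n$, so no $T_*$ can satisfy (ii), negating it.

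The main obstacle is the forward direction, and within it the reduction of ``no uniform displacement at small times'' to ``no arbitrarily small periods'' — precisely the point where Proposition \ref{fspn} is invoked to produce the uniform period gap $T_*$. Once $T_*$ is in hand, the positivity-plus-compactness step that extracts $\xi$ is routine, as is the continuity argument for the reverse implication.
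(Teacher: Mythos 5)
Your proof is correct in both directions. Note, though, that there is no in-paper proof to compare against: the paper states Lemma \ref{equilm} without proof, citing \cite[Lemma 2.1]{artigue18}, so your argument supplies what the paper leaves to the reference. It also stays entirely within the paper's own toolkit: Proposition \ref{fspn} is stated and proved \emph{before} the lemma and its proof makes no use of it, so invoking it here is not circular. Structurally, your forward direction splits into a period gap ($\inf P>0$, from Proposition \ref{fspn}) followed by a positivity-plus-compactness minimum of the continuous function $x\mapsto d(\phi_T(x),x)$ on the compact set $X\setminus\fix(\phi)$; this is the same compactness mechanism the paper itself employs inside the proof of Proposition \ref{fspn}. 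Two delicate points you handled correctly: first, the negation of (ii) only produces points of small \emph{displacement}, not exactly periodic points, so proving (ii) directly from the period gap (rather than trying to argue by contradiction from the negation of (ii)) is exactly the right order of quantifiers; second, in the converse, non-openness of $\fix(\phi)$ forces $\fix(\phi)\neq\varnothing$ and yields a fixed point $x_0$ that is a limit of non-fixed points $x_n$, at which $d(\phi_T(x_n),x_n)\to 0$ for \emph{every} $T>0$, so (ii) fails for any candidate $T_*$, not merely for small ones.
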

%

All classes of expansive flows we are going to present have finitely many fixed points. 
The next result comes from the theory of general topology.

\begin{lemma}\label{fiso}
	Let $(X,d)$ be a metric space and let $A\subset X$ be a finite set. 
	Then $A$ is open if and only if  each point in $A$ is an isolated point of $X$.
\end{lemma}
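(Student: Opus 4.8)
The plan is to prove the two implications separately, recalling that a point $x\in X$ is \emph{isolated} when some open ball $\{y\in X:d(x,y)<r\}$ reduces to $\{x\}$, while $A$ being open means every $a\in A$ has such a ball contained in $A$. I expect the finiteness hypothesis to be needed for only one of the two directions.

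First I would dispatch the implication that isolatedness of each point forces $A$ to be open, since this requires no finiteness at all. If every $a\in A$ is isolated, pick for each $a$ a radius $r_a>0$ with $\{y:d(a,y)<r_a\}=\{a\}\subseteq A$; then every point of $A$ is interior to $A$, so $A$ is open. (In fact this holds for an arbitrary subset $A$, finite or not.)

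For the converse, assume $A$ is open and fix $a\in A$. Openness gives $r>0$ with the ball $B(a,r)=\{y:d(a,y)<r\}$ contained in $A$. This is where finiteness enters: the set $A\setminus\{a\}$ is finite and omits $a$, so $\delta:=\min_{b\in A\setminus\{a\}}d(a,b)$ is strictly positive (reading the minimum over the empty set, in the case $A=\{a\}$, as $+\infty$). Setting $\rho:=\min\{r,\delta\}>0$, any $y$ with $d(a,y)<\rho$ lies in $B(a,r)\subseteq A$, yet cannot equal any $b\ne a$ of $A$ because $d(a,b)\ge\delta\ge\rho$; hence $y=a$. Thus $\{y:d(a,y)<\rho\}=\{a\}$ and $a$ is isolated.

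The only genuine obstacle is the step producing $\delta>0$, which is exactly where finiteness is indispensable: an infinite open set, such as an open interval in $\R$, can be open while having no isolated points, so the minimum-distance separation of $a$ from the rest of $A$ would fail without the finiteness assumption. Everything else is a routine radius-shrinking argument.
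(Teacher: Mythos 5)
Your proof is correct and follows essentially the same route as the paper: the reverse direction is the trivial union-of-balls argument, and the forward direction shrinks the openness radius below the minimum distance separating $a$ from the other (finitely many) points of $A$. The only difference is cosmetic: you take the minimum distance from $a$ to $A\setminus\{a\}$ pointwise (and handle the singleton case $A=\{a\}$ explicitly), whereas the paper uses the global minimum pairwise distance $\rho_A$ over all of $A$, which tacitly assumes $A$ has at least two points.
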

\begin{proof} For $\eps>0$ and $x\in X$, denote by $B_\eps(x)$ the open ball of radius $\eps$ centered at $x$. 
	Let $A\subset X$ be finite.  Define $\rho_A=\min \{d(x,y), x,y\in A, x\ne y\}>0$. 
	Suppose that $A$ is open, then for any $x\in A$ there is $\eps>0$ such that $B_\eps(x)\subset A$.
	If we take $0<\delta<\min\{\rho_A,\eps\}$, then $B_\delta(x)=\{x\}$ implies that $x$ is an isolated point of $X$.
	Conversely, if any $x\in A$ is an isolated point of $X$, then there exists $B_\eps(x)=\{x\}\subset A$
	shows that $A$ is open. The lemma is proved. 	
\end{proof}

\subsection{Fixed points of suspension flows}

Let $\sigma: X\to X$ be a homeomorphism and $f: X\to [0,\infty)$  a continuous function. Set
\[X^f=\{(x,s): 0\leq s< f(x),x\in X \}\subset X \times [0,\infty) .\]
Define a new space $X(\sigma,f)=X^f/\sim$, 
where $\sim$ is the identification $(x,f(x))\sim (\sigma(x),0)$.
A metric on $X(\sigma, f)$ introduced by Bowen and Walters in \cite{bw} makes $X(\sigma,f)$ a compact metric space.

\begin{figure}[ht]
	\begin{center}
		\begin{minipage}{0.5\linewidth}
			\centering
			\includegraphics[angle=0,width=0.7\linewidth]{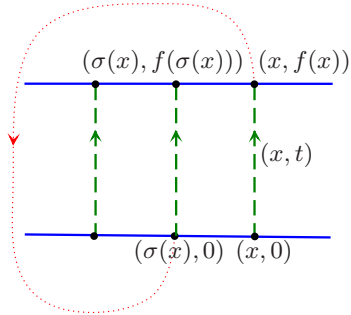}
		\end{minipage}
	\end{center}
	\caption{Suspension flow}\label{sf}
\end{figure}

\begin{definition} \label{susdf}\rm
	The {\em suspension of $\sigma$ under $f$} is the flow $\sus^{\sigma,f}_t:X(\sigma,f)\rightarrow X(\sigma,f)$
	defined  by 
	\[\sus_t^{\sigma,f}(x,s)= \Big(\sigma^k (x), t+s\pm\sum_{i=1}^{k-1} f(\sigma^i (x)) \Big), \]
	where $k$ is the unique number satisfying $0\le t+s\pm\sum_{i=0}^{k-1}f(\sigma^i (x)) < f(\sigma^k (x))$;
	see Figure \ref{sf} for an illustration.
\end{definition}

\begin{proposition}\label{sfp}\it
	Any suspension flow has no fixed points.
\end{proposition}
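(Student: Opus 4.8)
The plan is to exploit the fact that a suspension flow moves points vertically along the fibres at unit speed, so that no point can stay put even for an instant. First I would fix an arbitrary point $p\in X(\sigma,f)$ and write it in canonical form $p=(x,s)$ with $x\in X$ and $0\le s<f(x)$. Since $p$ is an honest point of $X^f$, the inequality $0\le s<f(x)$ forces $f(x)>0$; this already disposes of the fibres on which $f$ vanishes, which contribute no points to $X(\sigma,f)$, so that every point of the suspension lives over a base point with positive return time.

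Next I would read off the short-time behaviour of the flow directly from Definition \ref{susdf}. Choosing any $t$ with $0<t<f(x)-s$ (such $t$ exist precisely because $f(x)>s$), the index $k=0$ makes both indexed sums in the defining formula empty, and the defining inequality reduces to $0\le t+s<f(\sigma^0(x))=f(x)$. Hence $\sus_t^{\sigma,f}(x,s)=(x,s+t)$, i.e. for all sufficiently small positive $t$ the flow simply increases the second coordinate.

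The one point that requires care — and really the only content of the statement — is to confirm that raising the height genuinely produces a different point of the quotient, that is, $(x,s+t)\ne(x,s)$ in $X(\sigma,f)$ whenever $s+t\ne s$. This is where the identification $\sim$ must be used: it only glues a fibre-top $(y,f(y))$ to the fibre-bottom $(\sigma(y),0)$, so no two representatives sharing the first coordinate $x$ and having distinct heights in $[0,f(x))$ are ever identified. Equivalently, each point of $X(\sigma,f)$ has a unique representative $(x,s)$ with $0\le s<f(x)$, so $(x,s+t)$ and $(x,s)$ coincide only when $s+t=s$. If one prefers an analytic argument, the Bowen--Walters metric of \cite{bw} separates distinct heights along a common fibre, giving $d\big(\sus_t^{\sigma,f}(p),p\big)>0$ directly.

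Combining these steps, for every $p$ there is a small $t>0$ with $\sus_t^{\sigma,f}(p)\ne p$, so $p$ is not fixed; as $p$ was arbitrary, $\sus^{\sigma,f}$ has no fixed points. I expect the bookkeeping of the empty sums in Definition \ref{susdf} and the justification of the unique-representative claim to be the only places needing attention, with everything else being immediate from the unit-speed vertical motion.
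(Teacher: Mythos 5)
Your proof is correct, but it follows a genuinely different route from the paper's. The paper argues globally by contradiction: if $(x,s)$ were fixed, then \emph{every} $t\in\R$ would have to equal one of the cumulative return times $\pm\big(f(x)+f(\sigma(x))+\cdots+f(\sigma^{k-1}(x))\big)$, $k\in\Z$, and this set is countable while $\R$ is not. Your argument is local and constructive instead: for each point you exhibit explicit small times $t\in\big(0,f(x)-s\big)$ at which the flow strictly increases the fibre height, and you then check that the quotient does not collapse distinct heights. The paper's proof is shorter and entirely avoids analyzing the identification $\sim$, since it only compares first coordinates and accumulated times; yours is more elementary (no cardinality argument), handles the degenerate fibres with $f(x)=0$ explicitly, and makes the geometric mechanism (unit-speed vertical motion) transparent. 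One small point worth tightening in your write-up: the unique-representative claim should be justified against \emph{transitivity} of the generated equivalence relation, not just against the generating identifications; the clean way to say it is that a point $(x,u)$ with $0<u<f(x)$ appears in no generating relation $(y,f(y))\sim(\sigma(y),0)$, hence its equivalence class is a singleton, so $(x,s+t)\neq(x,s)$ in $X(\sigma,f)$ whenever $0\le s<s+t<f(x)$. With that observation in place, your argument is complete.
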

\begin{proof}
	By contradiction suppose $(x,s)\in X(\sigma,f)$ is a fixed point of $\sus^{\sigma,f}$. Then 
	$\sus^{\sigma,f}_t(x,s)=(x,s)$ implies that 
	\begin{equation}\label{tk}
		t= \pm (f(x)+f(\sigma(x))+\cdots + f(\sigma^{k-1}(x))).
	\end{equation} 
	This means that for any $t\in\R$, there exists $k\in \Z$ satisfying \eqref{tk}.
	But the set $$\{f(x)+f(\sigma(x))+\cdots + f(\sigma^{k-1}(x)), k\in \Z \}$$ is countable,
	which is impossible. 
\end{proof}

\section{Varieties of expansivity} \label{s3}
In this section we recall the definitions of separating and expansive homeomorphisms and flows
and present properties of several varieties of  expansive flows. 
\subsection{Separating and expansive homeomorphisms}

\begin{definition}\rm 
	Let $\sigma: X\to X$ be a homeomorphism.
	
	(i) $\sigma$ is called \textit{separating} if there exists $\delta>0$ such that if $x,y\in X$ and $d(\sigma^n(x),\sigma^n(y))<\delta$, then 
	$x=\sigma^k(y)$ for some $k\in\Z$.
	
	(ii) $\sigma$ is called \textit{expansive} if there exists $\delta>0$ such that if $x,y\in X$ and $d(\sigma^n(x),\sigma^n(y))<\delta$, then 
	$x=y$. 
	
\end{definition}
It is clear that an expansive homeomorphism is separating. Next 
we introduce a homeomorphism which is separating but not expansive. Recall \cite[Example 2.24]{artigue16} with an addition in order to $\sigma$ be well-defined. 
\begin{example}\rm \label{he} Consider a closed subset of the sphere $\R^2\cup\infty$
	\[X=\{\infty\}\cup\{(p,0): p\in\Z\}\cup\{(p,\pm 1/q): p\in\Z, q\in\Z^{+}, |p|\leq q \}.\]
	Define a homeomorphism $\sigma: X\to X$ by $\sigma(\infty)=\infty, \sigma(p,0)=(p+1,0),
	\sigma(p,\pm\frac{1}q)=(p+1,\pm\frac{1}q)$ if $p<q$, $\sigma(q,\pm\frac{1}q)=(-q,\mp\frac{1}q)$,
	$\sigma(-q,\pm\frac{1}{q})=(-q+1,\pm\frac{1}{q})$. 
	A short calculation shows that for all $0<\delta<1$,
	only $x=(0,\frac1q)$ and $y=(0,-\frac1q)$ with $p>2/\delta$ satisfy
	\[ d(\sigma^n(x),\sigma^n(y))=\frac{2}{q}<\delta\quad\mbox{for all}\quad n\in\Z.\]
	Furthermore, $\sigma^{2q+1}(0,\frac1q)=(0,-\frac1q)$
	implies $x$ and $y$ are in the same orbit (see Figure \ref{susfg}) and thus
	$\sigma$ is separating.
	However, due to $x\ne y$, it follows that $\sigma$ is not expansive. 
\end{example}

\begin{figure}[h] 
	\begin{center}
		\begin{minipage}[t]{0.6\linewidth}
			\centering
			\includegraphics[angle=0,width=1\linewidth]{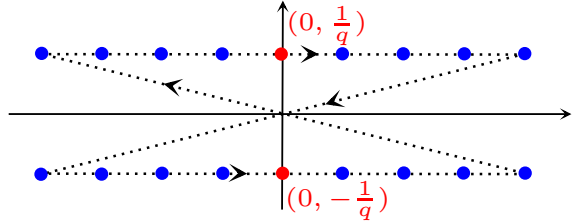}
		\end{minipage}
	\end{center}
	\caption{$(0,1/q)$ and $(0,-1/q)$ are in the same orbit}\label{susfg}
\end{figure}
\begin{example}[Shift map]  \rm 
	Consider $X=\{0,1\}^\Z=\{ (x_n)_{n\in\Z}: x_n\in\{0,1\}\}$
	with metric $d(x,y)=\sum_{n=-\infty}^\infty\frac{|x_n-y_n|}{2^{|n|}}$ for $x=(x_n), y=(y_n)\in X$.
	$(X,d)$ is a compact metric space and the shift map $\sigma: X\to X, (\sigma(x))_n=x_{n+1}$
	is a homeomorphism. 
	If $x\ne y$, then $x_n\ne y_n$ for some $n\in \Z$ implies  $d(\sigma^n(x),\sigma^n(y)))\geq 1/2$
	and thus $\sigma$ is expansive. 
\end{example}

\subsection{$C$-expansive and $K$-expansive flows}\label{sec3}
\begin{definition}[\cite{bw,komuro}]\rm Let $(X,d)$ be a compact metric space and let $\phi:\R\times X\rightarrow X$
	be a continuous flow.
	
	(i) $\phi$ is called {\em $C$-expansive}\footnote{In \cite{bw}, it is simply called \textit{expansive}. In some literatures, it is called \textit{BW-expansive}.} if for each $\eps>0$, 
	there exists $\delta>0$ such that if $x,y\in X, s\in C$ and 
	\[d(\phi_t(x),\phi_{s(t)}(y))<\delta \quad \mbox{for all}\quad t\in \R,\]
	then $y=\phi_\tau(x)$ for some $\tau\in (-\eps,\eps)$.
	
	(ii) $\phi$ is called {\em $K$-expansive} if for each $\eps>0$, 
	there exists $\delta>0$ such that if $x,y\in X, s\in K$ and 
	\[d(\phi_t(x),\phi_{s(t)}(y))<\delta \quad \mbox{for all}\quad t\in \R\]
	then $y=\phi_\tau(x)$ for some $\tau\in (-\eps,\eps)$.
\end{definition}
Such a $\delta$ is called an \textit{expansive constant} for $\eps$. 

\begin{example} \label{varphiex}\rm (a) Anosov flows are $C$-expansive (see \cite[Appendix]{flinn}).
	
	(b)  \rm Let $\Gamma$ be a discrete subgroup of $\PSL(2,\R)=\SL(2,\R)/\{\pm E_2\}$, 	where $\SL(2,\R)$ denotes the set of real matrices $2\times 2$ with unit determinant and $E_2$ denotes the unit matrix. Assume that the space
	$X=\Gamma\backslash\PSL(2,\R)$ is compact.   	
	Define a flow $(\varphi_t)_{t\in\R}$ in $X$ by 
	$\varphi_t(\Gamma g)=\Gamma g a_t$ for all $t\in\R$,
	where $a_t=\{\pm A_t\}$, $A_t={\scriptsize\begin{pmatrix}
			e^{t/2}&0\\0& e^{-t/2}
	\end{pmatrix}}$.
	Then $\varphi$ is $C$-expansive; see \cite[Theorem 3.2]{hien19}.
	
	(c) The suspension of an expansive homeomorphism under a continuous function is a $C$-expansive flow; see \cite[Theorem 6]{bw}. 
\end{example} 

It was shown in \cite{bw} that every fixed point of a $C$-expansive flow is isolated. Therefore,
the set of fixed points is finite and open. The authors also proved that in the case of fix-point-free flows,
$C$-expansivity is equivalent to $K$-expansivity (Theorem 2). Then Oka \cite{oka90}
proved that it is also true for flows with fixed points. 
\begin{theorem}[\cite{oka90}] A flow is $C$-expansive if and only if it is $K$-expansive.
\end{theorem}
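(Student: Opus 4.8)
The two implications are strongly asymmetric, so I would treat them separately. Since every increasing homeomorphism with $s(0)=0$ is in particular a continuous function with $s(0)=0$, we have $K\subseteq C$. Consequently, if $\phi$ is $C$-expansive then, for a given $\eps>0$, the very same $\delta$ that witnesses $C$-expansivity also witnesses $K$-expansivity: we are merely checking the defining implication for a smaller class of reparametrizations. Thus the direction ``$C$-expansive $\Rightarrow$ $K$-expansive'' is immediate and requires no work.

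All the content lies in the converse. The plan is, given an \emph{arbitrary} continuous reparametrization $s\in C$ for which the two orbits stay $\delta$-close for all time, to manufacture an increasing homeomorphism $h\in K$ for which they stay $\eps_0$-close for all time, where $\eps_0$ is a $K$-expansive constant. An appeal to $K$-expansivity applied to $h$ then yields $y=\phi_\tau(x)$ with $|\tau|<\eps$, as required. In this way everything reduces to a single \emph{monotonization} statement: closeness of the two orbits under a merely continuous $s$ should force $s$ to coincide, up to a uniformly small orbit-displacement, with an increasing homeomorphism.

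To build $h$ I would exploit the local product (flow-box) structure of $\phi$ at regular points together with compactness of $X$. Fix a finite cover by flow boxes and a Lebesgue-type scale $\delta$ so small that any two $\delta$-close points either lie in a common flow box, where a local ``time coordinate'' along orbits is well defined and varies monotonically, or else lie in a prescribed neighborhood of $\fix(\phi)$. On the flow-box part the constraint $d(\phi_t(x),\phi_{s(t)}(y))<\delta$ pins the phase of $\phi_{s(t)}(y)$ to that of $\phi_t(x)$ up to a bounded continuous error and forces $s$ to be essentially increasing: if $s$ decreased appreciably on some interval while the orbit of $y$ were away from $\fix(\phi)$, then $\phi_{s(t)}(y)$ would slide backward at definite speed while $\phi_t(x)$ advances, so $d$ would exceed $\delta$ within bounded time. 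I would therefore take $h$ to be a strictly increasing regularization of $s$ (for instance the running supremum of $s$ plus a small linear term, with $h(0)$ readjusted to $0$) and estimate $d(\phi_{s(t)}(y),\phi_{h(t)}(y))$ by the total amount of backtracking that $s$ performs, arguing that this is small because backtracking can occur only where the orbit sits near a fixed point and hence moves slowly.

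The main obstacle, and precisely the gap between Bowen and Walters' original fixed-point-free theorem and Oka's general one, is the behaviour of $s$ near $\fix(\phi)$. There the flow has no definite speed, so $s$ may oscillate or stall arbitrarily while $d$ stays below $\delta$, and the naive envelope need neither remain close to $s$ in orbit position nor even be a homeomorphism. Controlling this requires a uniform statement that orbits bounded away from $\fix(\phi)$ are genuinely displaced by the flow; I would first establish that the fixed-point set of a $K$-expansive flow is isolated, so that the uniform displacement dichotomy of Lemma \ref{equilm} becomes available, and combine it with the absence of arbitrarily short almost-recurrences guaranteed by Proposition \ref{fspn} and its corollary to rule out pathological monotonizations. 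Assembling the local estimates into a single globally defined $h\in K$, continuous in $t$ with $h(0)=0$, and verifying the uniform bound $d(\phi_t(x),\phi_{h(t)}(y))<\eps_0$, is where the real care is needed; once $h$ is in hand the conclusion is a one-line appeal to $K$-expansivity.
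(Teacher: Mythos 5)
Your overall architecture is the right one, and it matches the known proof: the direction ``$C$-expansive $\Rightarrow$ $K$-expansive'' is indeed immediate from $K\subseteq C$, and the hard direction does hinge on showing that every fixed point of a $K$-expansive flow is an isolated point of the space --- exactly the ``key point'' the paper singles out. The first genuine gap is that you never prove this isolation statement; you only announce that you ``would first establish'' it. That step is not routine bookkeeping: it is precisely Oka's contribution beyond Bowen--Walters, and the paper devotes a separate proposition to it, with a nontrivial argument. Concretely: supposing $y_0\neq x_0$ lies close to a fixed point $x_0$, one first notes (using $s(t)=t$ and $K$-expansivity) that the orbit of $y_0$ must leave the $\delta$-ball around $x_0$ at some time $t_0$; one then builds an explicit increasing homeomorphism $s\in K$ comparing the orbit of $y_0$ with that of $y_1=\phi_1(y_0)$, concludes from $K$-expansivity that $y_1=\phi_\tau(y_0)$ with $|\tau|<1$, hence that $y_0$ is periodic of period $1-\tau<2$, and this periodicity propagates the closeness on $[-2,2]$ to all of $\R$, contradicting the existence of $t_0$. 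Without this lemma your plan cannot start: Lemma \ref{equilm} requires $\fix(\phi)$ to be open, so the uniform displacement you want to feed into the monotonization is simply unavailable. Note also that the lemma must be derived from $K$-expansivity alone, before any $C$-versus-$K$ comparison, so it cannot be smuggled in later.

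The second gap is the monotonization itself, which is a sketch rather than a proof, and whose central device is shaky. The running supremum of $s$ plus a small linear term is increasing, but nothing in your outline bounds $d(\phi_{s(t)}(y),\phi_{h(t)}(y))$ in terms of anything controlled; moreover your stated mechanism for controlling backtracking (``backtracking can occur only where the orbit sits near a fixed point and hence moves slowly'') is in tension with the isolation you just invoked: once each fixed point is an isolated point of $X$, regular orbits never enter a fixed neighborhood of $\fix(\phi)$ at all, so that case never arises and cannot be used to absorb the error. The standard and much cleaner route --- the one Oka follows, and the same pattern the paper uses to prove Theorem \ref{KK*} --- is to observe that after the isolation lemma, $\fix(\phi)$ is finite and open (Lemma \ref{fiso}), so $\widetilde X=X\setminus\fix(\phi)$ is compact and invariant and carries a fixed-point-free $K$-expansive restriction; the Bowen--Walters fixed-point-free equivalence then gives $C$-expansivity on $\widetilde X$, and for $x,y\in X$ with orbits $\delta$-close under some $s\in C$, with $\delta$ below the isolation radius, either both points lie in $\widetilde X$ (done) or one of them is a fixed point, in which case $d(x,y)<\delta$ already forces $x=y$. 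Your flow-box program is in effect an attempt to re-prove the Bowen--Walters theorem from scratch; as written it is not carried out, and it is the step you yourself flag as ``where the real care is needed.''
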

A key point in the proof of the previous theorem is 
that each fixed point of a $K$-expansive flow is isolated. 
Next we provide a shorter proof by improving the one in \cite{oka90}.
\begin{proposition}\it 
	Every fixed point of a $K$-expansive flow is an isolated point of the space. 
\end{proposition}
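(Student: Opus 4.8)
The plan is to argue by contradiction. Suppose $p\in\fix(\phi)$ is not isolated, so there exist $x_n\in X\setminus\{p\}$ with $x_n\to p$. Fix a small $\eps>0$ and let $\delta>0$ be an expansive constant for $\eps$. The first thing I would record is a clean reduction that exploits the nature of the reference orbit. Since every $s\in K$ is an increasing homeomorphism of $\R$ onto $\R$, the image $\{s(t):t\in\R\}$ is all of $\R$; hence, when the reference orbit is the constant orbit $\{p\}$, the closeness condition $d(\phi_t(p),\phi_{s(t)}(y))<\delta$ for all $t$ is equivalent to $d(p,\phi_u(y))<\delta$ for all $u\in\R$, \emph{independently of $s$}. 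Consequently, to contradict $K$-expansivity it suffices to produce a single point $y\ne p$ whose entire orbit lies in $B_\delta(p)$: taking $s=\mathrm{id}\in K$ then forces $y=\phi_\tau(p)=p$ for some $\tau\in(-\eps,\eps)$, which is absurd.

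To manufacture such a point I would use the fixed point together with compactness of $X$. Because $\phi$ is continuous and $\phi_t(p)=p$ on the compact set $[-T,T]$, for each $T>0$ there is a neighbourhood of $p$ all of whose points have orbits staying in $B_{\delta/2}(p)$ for $|t|\le T$; since $x_n\to p$ this yields, after passing to a subsequence, that the exit times of $x_n$ from $\overline{B_\delta(p)}$ tend to $+\infty$ in both time directions. If some $x_n$ never leaves $\overline{B_{\delta/2}(p)}$, then, after shrinking the radius to make the inclusion strict, we are already done by the reduction above. Otherwise each $x_n$ is trapped in $\overline{B_\delta(p)}$ on an interval $[a_n,b_n]\supseteq[-n,n]$ with $a_n\to-\infty$ and $b_n\to+\infty$; recentring at a well-chosen time $c_n\in[a_n,b_n]$ and setting $w_n=\phi_{c_n}(x_n)$, I would extract a convergent subsequence $w_n\to w$. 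By construction $\phi_t(w)\in\overline{B_\delta(p)}$ for every $t\in\R$, so $w$ (after a harmless shrinking of the ball to regain strict inequality) contradicts $K$-expansivity exactly as in the reduction.

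The main obstacle is to guarantee that the trapped point is genuinely different from $p$, i.e. $w\ne p$. Recentring at the midpoint of $[a_n,b_n]$ gives full two-sided trapping but may collapse to $p$, whereas recentring at an exit point keeps the definite distance $\delta$ from $p$ but only yields one-sided trapping. My way around this is to choose $c_n$ at a time where $d(\phi_{c_n}(x_n),p)$ is bounded below (say equal to $\delta/2$) while retaining $c_n-a_n\to\infty$ and $b_n-c_n\to\infty$; if the orbit refuses to offer such a time, it must dwell inside an arbitrarily small ball about $p$ for arbitrarily long central time intervals, and recentring there (with the radius shrunk) again produces a nontrivial trapped orbit. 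Equivalently, one may pass to the $\alpha$- or $\omega$-limit set of a one-sided trapped orbit: any such set is a nonempty compact invariant subset of $\overline{B_\delta(p)}$, and as soon as it contains a point other than $p$ we are finished. Ruling out the degenerate \emph{excursion} behaviour---orbits that enter near $p$, approach it, and leave again, with maximal invariant set reduced to $\{p\}$---is precisely the step where expansivity is strictly stronger than separation; the discrete model of this obstruction is Example \ref{he}, which is separating but not expansive exactly because such excursions are permitted. I expect this to be the technical heart of the proof.
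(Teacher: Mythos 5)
Your proposal has a genuine gap, and it sits exactly where you suspect it does --- but it is not a technicality to be smoothed over; it is the entire content of the proposition, and your framework cannot supply it. Observe that the only place you ever invoke the $K$-expansivity hypothesis is with $s=\mathrm{id}$: your reduction ``it suffices to produce $y\ne p$ whose entire orbit lies in $B_\delta(p)$'' uses nothing beyond kinematic expansivity. But kinematic expansivity does \emph{not} imply that fixed points are isolated: the paper's Example \ref{khex2} (an irrational flow on the $2$-torus slowed to a stop at a single point $p$) is kinematic expansive, yet $p$ is a non-isolated fixed point, and that flow exhibits precisely the excursion behaviour you describe --- every nontrivial orbit near $p$ either converges to $p$ in one time direction while being dense in the other, or is dense in both, so \emph{no} nontrivial orbit is trapped in a small ball around $p$. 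Consequently your strategy of manufacturing a trapped orbit by compactness, recentring, or passing to $\alpha$/$\omega$-limit sets (which in the excursion case reduce to $\{p\}$, as you note) cannot succeed from the hypotheses you actually use: non-isolatedness alone simply does not produce a trapped orbit, and with $s=\mathrm{id}$ alone you have no lever to rule the excursion case out.

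The missing idea, which is the heart of the paper's proof, is to apply $K$-expansivity with a genuinely nontrivial reparametrization, comparing the orbit of a nearby point with a time-shift of \emph{itself}. Fix $x_0\in\fix(\phi)$, let $\delta$ be an expansive constant for $\eps=1$, and choose $\rho$ by uniform continuity so that $d(y_0,x_0)<\rho$ forces $d(\phi_t(y_0),x_0)<\delta/2$ for $|t|\le 2$. Given $y_0\in B_\rho(x_0)\setminus\{x_0\}$, set $y_1=\phi_1(y_0)$ and take $s\in K$ with $s(t)=t-1$ for $|t|\ge 2$, interpolated linearly through $s(0)=0$. For $|t|\ge 2$ one has $\phi_{s(t)}(y_1)=\phi_t(y_0)$ exactly, while for $|t|\le 2$ both $\phi_t(y_0)$ and $\phi_{s(t)+1}(y_0)$ lie within $\delta/2$ of $x_0$, so the expansivity condition holds for the pair $(y_0,y_1)$; this forces $y_1=\phi_\tau(y_0)$ with $|\tau|<1$, i.e.\ $y_0$ is \emph{periodic} with period $1-\tau\in(0,2)$. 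Periodicity is what kills the excursions: a periodic orbit of period less than $2$ starting in $B_\rho(x_0)$ stays within $\delta/2$ of $x_0$ for all time, and now your own trapped-orbit argument (with $s=\mathrm{id}$) finishes the proof. So the trapped orbit you were trying to build does exist in the end, but it is produced \emph{by} the expansivity hypothesis applied with nontrivial $s$, not by topology; without that step the proposal proves nothing stronger than what kinematic expansivity gives, and Example \ref{khex2} shows that is strictly too weak.
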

\begin{proof} Let $\phi;X\to X$ be a $K$-expansive flow and $\delta>0$ an expansive constant for $\eps=1$. Since $X$ is compact,  $\phi: [-2,2]\times X\to X$ is uniformly continuous. There exists $\rho>0$ such that if $x,y\in X$, $d(x,y)<\rho$, then $d(\phi_t(x),\phi_t(y))<\delta/2$ for all $t\in [-2,2]$.
	Fix $x_0\in \fix(\phi)$. We show that 
	$B_{\rho}(x_0)=\{x_0\}$, which implies that $x_0$ is an isolated point in $X$. 
	Suppose in contrary that there exists $y_0\in B_\delta(x_0)\setminus \{x_0\}$. If $d(\phi_t(y_0),\phi_t(x_0))<\delta$ for all $t\in\R$, then 
	$y_0=x_0$. Therefore 
	\begin{equation}\label{t0}
		d(\phi_{t_0}(y_0),x_0)\geq \delta\quad\mbox{for some} \quad t_0\in\R.
	\end{equation}
	We verify that $y_0$ is a periodic point with a period less than 2. 
	Let $y_1=\phi_1(y_0)$ and define
	\[s(t)=\begin{cases}
		t-1 & \mbox{if}\quad  |t|\geq 2,\\
		\frac{t}{2} &\mbox{if}\quad  0\leq t\leq 2,\\
		\frac{3t}{2}&\mbox{if}\quad  -2\leq t\leq 0.
	\end{cases} \]
	Then $s:\R\to\R$ is an increasing homeomorphism on $\R$ with $s(0)=0$. Owing to 
	$d(y_0,x_0)<\rho$, we have $d(\phi_t(y_0),\phi_t(x_0))<\delta/2$ for all $|t|\leq 2$. Hence,
	\begin{align*} d(\phi_t(y_0),\phi_{s(t)}(y_1))
		&=
		d(\phi_t(y_0),\phi_{s(t)+\eta}(y_0)) \\
		&
		\leq d(\phi_t(y_0), x_0)+d(x_0,\phi_{s(t)+\eta}(y_0))
		<  \delta\mbox{ \ for all\ } |t|\leq 2.
	\end{align*}
	In conjunction with  
	\[d(\phi_t(y_0),\phi_{s(t)}(y_1))=
	d(\phi_t(y_0),\phi_t(y_0))=0\quad \mbox{for}\quad |t|\geq 2, \]
	we obtain
	\begin{align*}
		d(\phi_t(y_0),\phi_{s(t)}(y_1))<\delta\quad\mbox{for all}\quad t\in \R.
	\end{align*} 
	By hypothesis, $y_1=\phi_\tau(y_0)$ for some $|\tau|<1$ and thus $\phi_{1-\tau}(y_0)=y_0$. This means that $y_0$ is a periodic orbit
	of $\phi$ with period $0<1-\tau<2$. 
	Using	$d(\phi_t(y_0),x_0)<\delta_0$ for all $t\in [-2,2]$, we have $d(\phi_t(y_0),x_0)<\delta_0$ for all $t\in\R$,
	which contradicts \eqref{t0}. The lemma is proved.
\end{proof} 
%
%

\subsection{$K^*$-expansive and geometric expansive flows}\label{3.3}
Let $\phi:X\to X$ be a continuous flow. We recall another distance in $X$ introduced by Artigue in \cite{artigue13}:
\[d_\phi(x,y)=
\begin{cases}\inf\{\diam(\phi_{[a,b]}(z)):z\in X, [a,b]\subset \R, x,y\in\phi_{[a,b]}(z) \}
	&\mbox{if}\ y\in\phi_\R(x)
	\\
	\diam(X) & \mbox{if}\ y\notin \phi_\R(x).
\end{cases}  \]

\begin{definition}[\cite{artigue16,komuro}]\rm Let $\phi: X\to X$ be a continuous flow.
	
	(i) $\phi$ is called \textit{geometric expansive}\footnote{In \cite{artigue13}, it is simply called \textit{expansive flow}.} if for each $\eps>0$, there exists $\delta>0$ 
	such that if $x,y\in X, s\in K$ satisfying
	\begin{equation*}d(\phi_t(x),\phi_{s(t)}(y))<\delta \quad \mbox{for all}\quad t\in \R,
	\end{equation*}
	then $d_\phi(x,y)<\eps$.
	
	(ii) $\phi$ is called \textit{$K^*$-expansive} if for each $\eps>0$, there exists $\delta>0$ 
	such that if $x,y\in X, s\in K$ satisfying
	\begin{equation*}d(\phi_t(x),\phi_{s(t)}(y))<\delta \quad \mbox{for all}\quad t\in \R,
	\end{equation*}
	then $\phi_{s(t_0)}(y)=\phi_{t_0+\tau}(x)$ for some $t_0\in\R$ and $\tau\in (-\eps,\eps)$.
\end{definition}

\begin{theorem}
	The flow $\phi$ is geometric expansive if and only if it is $K^*$-expansive.
\end{theorem}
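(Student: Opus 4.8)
The plan is to connect the geometric closeness measured by $d_\phi$ with temporal closeness along orbits, through two uniform estimates. First I would record a ``time implies geometry'' bound: since $\phi$ is uniformly continuous on $[-1,1]\times X$, the modulus $\omega(\tau)=\sup_{|h|\le\tau}\sup_{w\in X}d(\phi_h(w),w)$ satisfies $\omega(\tau)\to 0$ as $\tau\to 0$, and for every $z\in X$ the arc $\phi_{[0,\tau]}(z)$ contains $z$ and $\phi_\tau(z)$ with $\diam(\phi_{[0,\tau]}(z))\le\omega(|\tau|)$; hence $d_\phi(z,\phi_\tau(z))\le\omega(|\tau|)$ uniformly in $z$. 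Second I would record the converse ``time implies separation'' bound. For this I would first observe that the fixed point set of a geometric expansive or $K^*$-expansive flow is open — the same phenomenon recalled above for $C$- and $K$-expansive flows, where every fixed point is isolated — so that for each sufficiently small $T>0$ Lemma~\ref{equilm} provides $\xi=\xi(T)>0$ with $d(\phi_T(z),z)>\xi$ for all $z\notin\fix(\phi)$, while Proposition~\ref{fspn} ensures such $T$ can be taken below every period. Consequently any regular orbit arc lasting time at least $T$ has diameter at least $\xi$. Fixed points themselves are trivial: if $x\in\fix(\phi)$ and $\phi_{s(t_0)}(y)=\phi_{t_0+\tau}(x)=x$, then $y=\phi_{-s(t_0)}(x)=x$, so $d_\phi(x,y)=0$.

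For the implication geometric $\Rightarrow K^*$, fix $\eps>0$, choose $T\in(0,\eps]$ below every period with displacement bound $\xi>0$ as above, and set $\eps'=\xi$. Applying geometric expansivity yields $\delta>0$ such that the shadowing hypothesis forces $d_\phi(x,y)<\xi$. Since $\xi<\diam(X)$ this gives $y\in\phi_\R(x)$, and unwinding the infimum defining $d_\phi$ produces $c\in\R$ with $y=\phi_c(x)$ and $\diam(\phi_{[0,c]}(x))<\xi$; by the converse bound (and the absence of small periods, which guarantees the thin arc is genuinely the short one, with no wrap-around) this forces $|c|<T\le\eps$. The $K^*$ conclusion is then immediate with $t_0=0$: as $s(0)=0$ we get $\phi_{s(0)}(y)=y=\phi_c(x)=\phi_{0+c}(x)$ with $|c|<\eps$.

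For the converse $K^*\Rightarrow$ geometric, fix $\eps>0$, keep a small $T>0$ (below every period) with displacement bound $\xi>0$, choose $\beta>0$ with $\omega(\beta)<\eps$, and apply $K^*$-expansivity with $\eps_\ast=\beta$ to obtain $\delta_0$; I would then take the geometric expansive constant to be $\delta=\min(\delta_0,\xi/2)$, so that the shadowing hypothesis simultaneously triggers the $K^*$ conclusion and stays below the separation threshold $\xi$. The coincidence $\phi_{s(t_0)}(y)=\phi_{t_0+\tau}(x)$ already places $y$ on the orbit of $x$, say $y=\phi_c(x)$ with $c$ the short-time representative, so only the diameter of $\phi_{[0,c]}(x)$ is at stake. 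The core claim is that the shadowing $d(\phi_t(x),\phi_{s(t)}(y))<\delta$, holding for \emph{every} $t\in\R$ through the synchronising homeomorphism $s$, forbids $y$ from lying ``the long way around'' the orbit, i.e.\ forces $|c|<\beta$; granting this, the first estimate gives $d_\phi(x,y)\le\diam(\phi_{[0,c]}(x))\le\omega(\beta)<\eps$.

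The step I expect to be the main obstacle is exactly this last claim, because $d_\phi$ is not invariant under the flow and a long orbit arc can be geometrically thin when it lingers near a fixed point. I would settle it by contradiction and compactness: if it failed there would be sequences $x_n,y_n,s_n$ with shadowing constant $1/n$, coincidences within time $\beta$, yet short-time lags $c_n$ bounded away from $0$; passing to convergent subsequences $x_n\to x_\ast$ and $y_n\to y_\ast$ (so $x_\ast=y_\ast$, since $d(x_n,y_n)<1/n$), I would use the openness of $\fix(\phi)$ through Lemma~\ref{equilm} to rule out the slow-creep-near-a-fixed-point configuration and Proposition~\ref{fspn} to rule out the nearly-periodic long-way-around configuration, extracting a time $t$ at which the shadowed points are separated by more than $\delta$. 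Controlling the reparametrizations $s_n$ uniformly enough to perform this extraction is the delicate technical heart of the proof.
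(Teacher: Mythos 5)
Your proof breaks down at its very first structural step: the claim that the fixed point set of a geometric expansive or $K^*$-expansive flow is open (each fixed point isolated). This is false, and the paper itself provides the counterexample: the Lorenz attractor (Example \ref{lorenz}) is $K^*$-expansive --- equivalently geometric expansive --- yet has two non-isolated fixed points; indeed the text right after that example stresses that a $K^*$-expansive flow may have non-isolated fixed points, and Theorem \ref{KK*} exists precisely because openness of $\fix(\phi)$ is an \emph{additional} hypothesis, not automatic. Consequently Lemma \ref{equilm} is unavailable to you, and your ``time implies separation'' bound --- every regular orbit arc of duration at least $T$ has diameter at least $\xi$ --- fails: an orbit creeping past a non-isolated fixed point can spend arbitrarily long time inside an arbitrarily small ball. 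Both directions of your argument lean on this bound, so neither survives.

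The failure is most visible in your direction geometric $\Rightarrow$ $K^*$: you conclude with $t_0=0$, i.e.\ $y=\phi_c(x)$ with $|c|<\eps$, which is exactly the $K$-expansive conclusion. If that argument were correct, it would show that geometric expansivity implies $K$-expansivity, contradicting Example \ref{lorenz}(b). The entire content of the theorem lives in the regime your argument excludes: when $y=\phi_c(x)$ with $|c|$ large but the connecting arc geometrically thin (the lingering-near-a-singularity configuration), the $K^*$ conclusion must be achieved with a nontrivial time shift $t_0$, synchronising the two orbits somewhere along the thin arc rather than at $t=0$. Handling that regime without any displacement bound is where the real work lies; this is what the proof cited by the paper (\cite[Theorem 1.3]{artigue13}, which is all the paper offers for this statement) accomplishes, and what your closing compactness sketch --- which again invokes openness of $\fix(\phi)$ ``through Lemma \ref{equilm}'' --- cannot do.
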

See \cite[Theorem 1.3]{artigue13} for a proof.

\begin{example}[Lorenz attractor, \cite{komuro}]  \label{lorenz}\rm 
	(a) 
	The Lorenz attractor, defined as the inverse limit of a semi-flow on a 2-dimensional branched manifold, is $K^*$-expansive (equivalently, geometric expansive). 
	
	(b) The Lorenz attractor is not $K$-expansive because it has two fixed points which are non-isolated.
\end{example}
As seen in the previous example, a $K^*$-expansive flow may have non-isolated fixed points.
In the case that each fixed point is an isolated point,  $K$-expansivity and $K^*$-expansivity are equivalent:
\begin{theorem}\label{KK*}
	Suppose that $\fix(\phi)$ is open. Then  $\phi$ is $K$-expansive
	if and only if $\phi$ is $K^*$-expansive.
\end{theorem}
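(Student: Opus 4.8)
The plan is to prove the two implications separately; only the harder one will use that $\fix(\phi)$ is open.

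First, $K$-expansivity trivially implies $K^*$-expansivity. Given $\eps>0$, I would take the same $\delta$ furnished by $K$-expansivity: if $d(\phi_t(x),\phi_{s(t)}(y))<\delta$ for all $t$, then $y=\phi_\tau(x)$ with $|\tau|<\eps$, and since $s(0)=0$ we have $\phi_{s(0)}(y)=y=\phi_{0+\tau}(x)$, so the $K^*$-conclusion holds with $t_0=0$. This direction needs no hypothesis on $\fix(\phi)$.

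For the converse, fix $\eps>0$; since it suffices to treat small $\eps$, assume $\eps<T_*$, the constant of Lemma \ref{equilm} (note that by Lemma \ref{equilm} together with Proposition \ref{fspn} no periodic point has period in $(0,T_*)$, so every minimal period is $\ge T_*$). Let $\delta^*$ be an expansive constant for $\eps$ coming from $K^*$-expansivity, and let $\xi>0$ be the number associated with $T=\eps$ in Lemma \ref{equilm}, so that $d(\phi_{\pm\eps}(z),z)>\xi$ for every $z\notin\fix(\phi)$ (the $-\eps$ case follows by applying the $+\eps$ bound at $\phi_{-\eps}(z)$). I claim $\delta=\min\{\delta^*,\xi\}$ is a $K$-expansive constant for $\eps$. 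Assume $d(\phi_t(x),\phi_{s(t)}(y))<\delta$ for all $t$ with $s\in K$. By $K^*$-expansivity there are $t_0$ and $|\tau|<\eps$ with $\phi_{s(t_0)}(y)=\phi_{t_0+\tau}(x)$; hence the orbits of $x$ and $y$ meet and $y=\phi_c(x)$ for some $c$. If $x\in\fix(\phi)$ this forces $y=x=\phi_0(x)$ and we are done, so assume $x\notin\fix(\phi)$, whence its whole orbit avoids $\fix(\phi)$. Writing $G(t)=s(t)+c-t$, the tracking hypothesis reads $d(\phi_{G(t)}(\phi_t(x)),\phi_t(x))<\delta$ for all $t$, and the matching at $t_0$ says $G(t_0)-\tau\in P$, where $P=\{r:\phi_r(x)=x\}$ is $\{0\}$ or $T_x\Z$ with $T_x\ge T_*$. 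Put $\rho(t)=\mathrm{dist}(G(t),P)$, a continuous function; then $\rho(t_0)\le|\tau|<\eps$, while the desired conclusion $y=\phi_{\tau'}(x)$ with $|\tau'|<\eps$ is exactly $\rho(0)<\eps$ (take $\tau'=c-p$ for the nearest $p\in P$, using $\phi_p(x)=x$).

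The key step is then to rule out $\rho(0)\ge\eps$. If $\rho(0)\ge\eps>\rho(t_0)$, continuity of $\rho$ and the intermediate value theorem produce $t_1$ with $\rho(t_1)=\eps$. Choosing $p\in P$ nearest to $G(t_1)$ and using $\phi_p(\phi_{t_1}(x))=\phi_{t_1}(x)$, the tracking inequality gives $d(\phi_{\pm\eps}(\phi_{t_1}(x)),\phi_{t_1}(x))<\delta\le\xi$, which contradicts Lemma \ref{equilm} because $\phi_{t_1}(x)\notin\fix(\phi)$ and $\eps\in(0,T_*)$. Therefore $\rho(0)<\eps$, and $K$-expansivity follows.

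The main obstacle, and the reason a naive argument fails, is that evaluating the tracking only at $t=0$ yields merely $d(x,y)<\delta$ and $y=\phi_c(x)$, which does not pin down $c$: a recurrent non-periodic orbit can return $\delta$-close to $x$ for arbitrarily large $c$. What rescues the argument is combining the tracking at all $t$ with the matching time $t_0$ supplied by $K^*$-expansivity: the shift-to-period distance $\rho$ is continuous and small at $t_0$, and any crossing of the value $\eps$ is forbidden by the uniform displacement estimate of Lemma \ref{equilm}. This is precisely the point where openness of $\fix(\phi)$ is indispensable, the property that fails for the geometric Lorenz attractor of Example \ref{lorenz}.
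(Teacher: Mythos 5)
Your proof is correct, but it takes a genuinely different route from the paper's. The paper reduces to the fixed-point-free case: since $\fix(\phi)$ is open, $\widetilde X=X\setminus\fix(\phi)$ is compact and the restricted flow $\psi=\phi|_{\widetilde X}$ has no fixed points, so Oka's result (Lemma \ref{ld}) is invoked as a black box to conclude that $\psi$ is $K$-expansive; pairs involving a fixed point are then handled by the isolation radius coming from Lemma \ref{fiso}, and the expansive constant is the minimum of the two. Your argument never touches Lemma \ref{ld}: you work directly on $X$, rewrite the tracking hypothesis in terms of the discrepancy $G(t)=s(t)+c-t$, note that the period group $P=\{r:\phi_r(x)=x\}$ of a regular point is $\{0\}$ or $T_x\Z$ with $T_x\ge T_*$, and run an intermediate value theorem argument on $\rho(t)=\mathrm{dist}(G(t),P)$, any crossing of the level $\eps$ being excluded by the uniform displacement bound $\xi$ of Lemma \ref{equilm}. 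Both case analyses are complete (in particular, $y=\phi_c(x)$ with $x\in\fix(\phi)$ forces $y=x$, and $y\in\fix(\phi)$ forces $x\in\fix(\phi)$, so splitting on $x$ alone suffices), $\rho$ is continuous, and the nearest point in the closed set $P$ is attained, so the contradiction with Lemma \ref{equilm} is sound. What the paper's route buys is brevity, at the price of relying on Oka's nontrivial theorem and on finiteness of $\fix(\phi)$ (needed to apply Lemma \ref{fiso}). What your route buys is self-containedness: it uses only Lemma \ref{equilm}, requires neither finiteness nor isolation of fixed points as separate inputs, and, specialized to $\fix(\phi)=\varnothing$, it in fact yields an independent proof of Lemma \ref{ld} itself.
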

To prove this theorem, we need a weaker result. 
\begin{lemma}[\cite{oka90}]\label{ld}
	A fixed-point-free flow is $K$-expansive if and only if it is $K^*$-expansive.
\end{lemma}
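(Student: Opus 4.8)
The plan is to prove the two implications separately; only the passage from $K^*$-expansivity to $K$-expansivity will require the absence of fixed points. For the easy direction, I would observe that a $K$-expansive flow is automatically $K^*$-expansive, with no hypothesis on $\fix(\phi)$ at all: given $\eps>0$, take the $K$-expansive constant $\delta$, and suppose $d(\phi_t(x),\phi_{s(t)}(y))<\delta$ for all $t$ with $s\in K$. Then $y=\phi_\tau(x)$ for some $|\tau|<\eps$, and choosing $t_0=0$ gives $\phi_{s(t_0)}(y)=\phi_0(y)=y=\phi_\tau(x)=\phi_{t_0+\tau}(x)$, which is exactly the defining conclusion of $K^*$-expansivity. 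So this half is essentially immediate.

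The substance is the converse. Here I would route the argument through the orbit metric $d_\phi$ rather than tracking the reparametrization directly. The reason is that the $K^*$-conclusion only locates a matching $\phi_{s(t_0)}(y)=\phi_{t_0+\tau}(x)$ at \emph{some} time $t_0$, and the shift $s(t_0)-t_0$ is a priori unbounded, so one cannot naively transport the smallness of $\tau$ back to $t=0$. The quantity $d_\phi(x,y)$, by contrast, measures the diameter of the orbit arc joining $x$ and $y$ and is insensitive to $s$. I would therefore invoke the equivalence of $K^*$-expansivity and geometric expansivity (\cite[Theorem 1.3]{artigue13}), whose conclusion is phrased precisely as $d_\phi(x,y)<\eps$, and then convert smallness of $d_\phi$ into smallness of the connecting time using fixed-point-freeness.

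The bridge from $d_\phi$ to a small time shift is the key step, and it is exactly where the hypothesis enters. Since $\fix(\phi)=\emptyset$ is open, Lemma \ref{equilm} supplies a constant $T_*>0$ so that for each $\eps\in(0,T_*)$ there is $\xi>0$ with $d(\phi_\eps(x),x)>\xi$ for every $x\in X$. I would then argue: if an orbit arc $\phi_{[a,b]}(z)$ contains both $x$ and $y$ and has diameter below $\xi$, its sub-arc from $x$ to $y$ has the form $\phi_{[0,L]}(x)$ with $\diam\phi_{[0,L]}(x)<\xi$; were $L\ge\eps$, then $x$ and $\phi_\eps(x)$ would both lie on this sub-arc, forcing $\diam\ge d(\phi_\eps(x),x)>\xi$, a contradiction. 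Hence $L<\eps$, and $d_\phi(x,y)<\xi$ (which also rules out $y\notin\phi_\R(x)$, since that would make $d_\phi=\diam X$) yields $y=\phi_\tau(x)$ with $|\tau|<\eps$. Assembling the pieces: given $\eps>0$, I may assume $\eps<T_*$ (the conclusion only weakens as $\eps$ grows), produce $\xi$ as above, and feed $\xi$ as the target accuracy into geometric expansivity to get $\delta>0$; then $d(\phi_t(x),\phi_{s(t)}(y))<\delta$ for all $t$ forces $d_\phi(x,y)<\xi$, whence $y=\phi_\tau(x)$ with $|\tau|<\eps$, which is $K$-expansivity.

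I expect the main obstacle to be the orbit-arc diameter lemma in the third paragraph, since it is the sole point at which fixed-point-freeness is used and it is genuinely indispensable: the lower bound $d(\phi_\eps(x),x)>\xi$ is precisely what fails near a non-isolated fixed point. This is consistent with the Lorenz attractor (Example \ref{lorenz}), which is $K^*$-expansive yet not $K$-expansive exactly because its non-isolated fixed points destroy the uniform bound, so both the lemma and the conclusion break down once fixed points are present.
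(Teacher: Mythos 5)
Your proof is correct, but note that there is nothing internal to compare it against: the paper states this lemma with a citation to Oka \cite{oka90} and gives no proof of it (it is imported solely as an ingredient for Theorem \ref{KK*}). Measured against Oka's original argument, your route is genuinely different. Oka's 1990 proof is a direct reparametrization-tracking argument and predates the orbit metric $d_\phi$; you instead pass through the equivalence of $K^*$-expansivity with geometric expansivity (\cite[Theorem 1.3]{artigue13}, quoted in Subsection \ref{3.3} of the paper) and then convert $d_\phi(x,y)<\xi$ into $y=\phi_\tau(x)$ with $|\tau|<\eps$ via the uniform lower bound $d(\phi_\eps(z),z)>\xi$ from Lemma \ref{equilm}, applicable because $\fix(\phi)=\varnothing$ is open. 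The details check out: the forward implication is immediate since $s(0)=0$ for every $s\in K$, so $t_0=0$ witnesses the $K^*$-conclusion; and in the converse, an arc $\phi_{[a,b]}(z)$ of diameter less than $\xi$ containing $x=\phi_{t_1}(z)$ and $y=\phi_{t_2}(z)$ gives $y=\phi_{t_2-t_1}(x)$, where $|t_2-t_1|\geq\eps$ would place some endpoint $w$ of the sub-arc together with $\phi_\eps(w)$ inside an arc of diameter below $\xi$, a contradiction; moreover $\xi<d(\phi_\eps(x),x)\leq\diam X$, so $d_\phi(x,y)<\xi$ does exclude $y\notin\phi_\R(x)$, as you observe. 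Two cosmetic points for a final write-up: the sub-arc joining $x$ to $y$ is $\phi_{[0,L]}(x)$ or $\phi_{[0,L]}(y)$ according to the sign of $t_2-t_1$ (your phrasing assumes the first case; the bound applies at either endpoint, so nothing breaks), and you should record explicitly that the reduction to $\eps<T_*$ is harmless and that \cite[Theorem 1.3]{artigue13} is proved independently of Oka's lemma, so no circularity arises. What your approach buys is brevity and a sharp isolation of where fixed-point-freeness enters (exactly the point your Lorenz-attractor remark confirms); what it costs is reliance on the quoted 2013 equivalence, whereas Oka's argument is self-contained.
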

\begin{proof}[Proof of Theorem \ref{KK*}]
	We only need to prove the reverse.
	Let $\widetilde{X}=X\setminus \fix(\phi)$ and $\psi=\phi|_{\widetilde{X}}$, i.e.,
	$\psi: \R\times\widetilde X\to \widetilde X, \psi_t(x)=\phi_t(x)$ for all $(t,x)\in\R\times\widetilde X$.
	Suppose that $\phi$ is $K^*$-expansive on $X$, then $\psi$ is $K^*$-expansive on $\widetilde X$.
	Since $\psi$ has no fixed points in $\widetilde X$, it follows that 
	$\psi$ is $K$-expansive on $\widetilde X$, owing to Lemma \ref{ld}.
	For each $\eps>0$, let $\delta_1$ be an expansive constant for $\eps$ of $\psi$. 
	Due to the fact that $\fix(\phi)$ is open and finite, each fixed point of $\phi$ is isolated; see Lemma \ref{fiso}.
	Let $\delta_2>0$ be such that $B_{\delta_2}(x)=x$ for all $x\in \fix(\phi)$.
	We show that $\delta=\min\{\delta_1,\delta_2\}$ is an expansive constant for $\eps$ of $\phi$. 
	Let $x,y\in X$ and $s\in K$ be such that 
	\[d(\phi_t(x),\phi_{s(t)}(y))<\delta\quad\mbox{for all}\quad t\in\R. \]
	If $x,y\in \widetilde X$, then $y=\psi_\tau(x)=\phi_\tau(x)$ for some $\tau\in(-\eps,\eps)$
	since $\psi$ is expansive on $\widetilde X$. 
	If either $x\in \fix(\phi)$ or $y\in\fix(\phi)$, then $d(x,y)<\delta\leq\delta_2$ implies that $x=y$.
	The proof is complete. 
\end{proof}

\subsection{Kinematic expansive flows}
\begin{definition}[\cite{artigue16}]\rm A continuous flow  $\phi$ on $X$
	is called {\em kinematic expansive}\footnote{It is called \textit{$\{id\}$-expansive} in \cite{keynes}.} if for each $\eps>0$, there exists $\delta>0$ 
	such that if $x,y\in X$,
	\begin{equation*}d(\phi_t(x),\phi_{t}(y))<\delta \quad \mbox{for all}\quad t\in \R,
	\end{equation*}
	then $y=\phi_\tau(x)$ for some $\tau\in(-\eps,\eps)$.
\end{definition}
Here is an equivalent statement of kinematic expansivity.
\begin{proposition}[\cite{artigue16}]\label{kel}\it A flow $\phi$ is kinematic expansive if and only if for all $\eps>0$
	there exists $\delta>0$ such that if $d(\phi_t(x),\phi_t(y))<\delta$ for all $t\in\R$, then
	$d_\phi(x,y)<\delta$. 
\end{proposition}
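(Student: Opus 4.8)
The plan is to prove the two implications separately, after recording two elementary consequences of compactness that bridge the time displacement $\tau$ and the orbit metric $d_\phi$. Since $(t,x)\mapsto\phi_t(x)$ is uniformly continuous on $[-1,1]\times X$, for every $\eps>0$ there is $\eps'\in(0,\eps)$ such that $\diam\big(\phi_{[\min(0,\tau),\max(0,\tau)]}(x)\big)<\eps$ whenever $|\tau|\le\eps'$, uniformly in $x\in X$; taking $z=x$ in the infimum defining $d_\phi$ then gives $d_\phi(x,\phi_\tau(x))<\eps$ for all $x$ and all $|\tau|<\eps'$. Secondly, straight from the definition, $d_\phi(x,y)<\diam(X)$ forces $y\in\phi_\R(x)$, and any admissible segment $\phi_{[a,b]}(z)\ni x,y$ yields a time $\tau$ with $y=\phi_\tau(x)$ and $\diam(\phi_{[\min(0,\tau),\max(0,\tau)]}(x))\le\diam(\phi_{[a,b]}(z))$.

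For the forward implication, assume $\phi$ is kinematic expansive and fix $\eps>0$. Choose $\eps'$ as above and let $\delta>0$ be the constant provided by kinematic expansivity for $\eps'$. If $d(\phi_t(x),\phi_t(y))<\delta$ for all $t\in\R$, then $y=\phi_\tau(x)$ with $|\tau|<\eps'$, and the first bridge fact gives $d_\phi(x,y)=d_\phi(x,\phi_\tau(x))<\eps$, which is the stated conclusion.

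For the reverse implication I would argue by contraposition and compactness. Suppose the $d_\phi$-condition holds but $\phi$ is not kinematic expansive: there are $\eps_0>0$ and points $x_n,y_n$ with $d(\phi_t(x_n),\phi_t(y_n))<1/n$ for all $t$, while $y_n\ne\phi_\tau(x_n)$ for every $|\tau|<\eps_0$. Applying the $d_\phi$-condition with target constants shrinking to $0$ forces $d_\phi(x_n,y_n)\to0$. If $y_n\notin\phi_\R(x_n)$ for infinitely many $n$, then $d_\phi(x_n,y_n)=\diam(X)$ for those $n$, directly contradicting the $d_\phi$-condition; so the substantive case is $y_n\in\phi_\R(x_n)$, where by the second bridge fact we may write $y_n=\phi_{\tau_n}(x_n)$ with $\tau_n\ge\eps_0$ (after reparametrizing, and taking $\tau_n>0$) and with a fundamental segment $\phi_{[0,\tau_n]}(x_n)$ whose diameter tends to $0$.

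Passing to a subsequence with $x_n\to x_*$, the segment $\phi_{[0,\tau_n]}(x_n)$ of duration $\tau_n\ge\eps_0$ and diameter $\to0$ forces $\phi_s(x_*)=x_*$ for all $s\in[0,\eps_0]$, hence $x_*\in\fix(\phi)$. The main obstacle is now to turn the \emph{geometric} smallness $d_\phi(x_n,y_n)\to0$ into a contradiction, since a short orbit segment may correspond to a long displacement time through slow passage near $x_*$. I expect the resolution to rest on the hypothesis controlling \emph{all} $t\in\R$: the relation $d(\phi_t(x_n),\phi_{t+\tau_n}(x_n))<1/n$ makes the whole orbit of $x_n$ nearly $\tau_n$-periodic, and together with the small fundamental segment this rules out the two-sided separation that slow passage toward a single sink or source would otherwise create (a one-sided slow passage, as in a north--south flow, never produces orbit-close pairs). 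Carrying this out, one extracts from the near-periodic orbits an invariant set accumulating on $x_*$, and thereby a pair lying on two \emph{distinct} orbits that is nonetheless orbit-close; for such a pair $d_\phi=\diam(X)$, contradicting the $d_\phi$-condition. The delicate point, and the step I expect to demand the most care, is exactly this uniform time/trapping control near the fixed point, where the two-sided nature of the closeness hypothesis is indispensable.
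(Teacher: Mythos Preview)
The paper does not supply its own proof of this proposition; it is quoted from \cite{artigue16} (Proposition~2.6 there) without argument, so there is no in-paper proof to compare against. I will therefore comment on your attempt on its own terms. (Incidentally, the conclusion printed in the paper reads $d_\phi(x,y)<\delta$; this is evidently a typo for $d_\phi(x,y)<\eps$, and you are proving the intended version.)

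Your forward implication is correct and is the standard reduction: uniform continuity of $\phi$ on $[-1,1]\times X$ converts a small time displacement $|\tau|<\eps'$ into a small arc of diameter $<\eps$, and kinematic expansivity produces such a $\tau$.

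The reverse implication, however, is not a proof but a plan with an acknowledged gap. You correctly arrive at $x_n\to x_*\in\fix(\phi)$ with $y_n=\phi_{\tau_n}(x_n)$, $\tau_n\ge\eps_0$, and $\diam(\phi_{[0,\tau_n]}(x_n))\to0$, and you correctly isolate the obstruction: near a fixed point a small $d_\phi$-distance does \emph{not} force a small time displacement. But the resolution you sketch---``extract from the near-periodic orbits an invariant set accumulating on $x_*$, and thereby a pair lying on two distinct orbits that is nonetheless orbit-close''---is not carried out and is not obviously executable. From $d(\phi_t(x_n),\phi_{t+\tau_n}(x_n))<1/n$ one gets, by iteration, $d(\phi_t(x_n),x_n)\le (|t|/\eps_0+1)/n+\diam(\phi_{[0,\tau_n]}(x_n))$, which shows the orbit of $x_n$ collapses onto $x_*$ on any \emph{bounded} time window, but \emph{not} uniformly in $t$; so you do not get a pair $(a,b)$ on distinct orbits with $\sup_t d(\phi_t(a),\phi_t(b))$ small, which is what your contradiction requires. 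A genuine argument here has to use the $d_\phi$-hypothesis at \emph{every} time shift (for each $s$ one also has $d_\phi(\phi_s(x_n),\phi_s(y_n))$ small) together with the fact that the orbit of $x_n$ cannot remain in a small ball around $x_*$ (otherwise $x_n$ and the fixed point $x_*$ would be $d$-close for all time, forcing $d_\phi(x_n,x_*)$ small while in fact $d_\phi(x_n,x_*)=\diam(X)$). As written, your argument stops before this mechanism is made to yield a contradiction, so the reverse direction remains open.
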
 

Recall that the orbit of flow $\phi$ through $x\in X$ is defined by $\phi_\R(x)=\{ \phi_t(x), \ t\in\R \}.$
\begin{definition}[\cite{flinn}]\label{tcdn}\rm Let $X$ be a metric space and let $\phi,\psi:\R\times X\to X$ be continuous flows. We say 
	that $\phi$ is a {\em time change} of $\psi$ if 
	for every $x\in X$ the orbits $\phi_\R(x)$, $\psi_\R(x)$ 
	and their orientations coincide.
\end{definition}
A kinematic expansive flow may be not a time change invariant. The following definition is natural.
\begin{definition}[\cite{artigue16}]\rm 
	A flow is called \textit{strong kinematic expansive}\footnote{It is equivalent to the notion of \textit{weakly expansivity} in \cite{flinn}.}
	if any time change of its is kinematic expansive. 
\end{definition}

\begin{figure}[h]
	\begin{center}
		\begin{minipage}{0.6\linewidth}
			\centering
			\includegraphics[angle=0,width=0.5\linewidth]{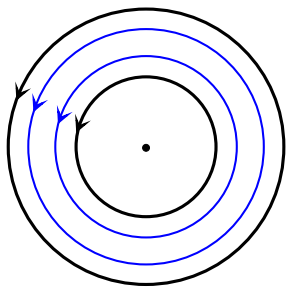}
		\end{minipage}
	\end{center}
	\caption{Periodic orbits in the annulus}\label{fig2}
\end{figure}

\begin{example}[\cite{artigue16}]\label{ccex}\rm 
	Consider  flow $\phi$ generated by the differential equation $(\dot x_1,\dot x_2)=\frac{1}{\sqrt{x_1^2+x_2^2}}(-x_2,x_1)$ on the annulus $A=\{ (x_1,x_2)\in\R^2: 1\leq x_1^2+x_2^2\leq 4\}$. For $x=(x_1,x_2)\in A$, let $\alpha=\sqrt{x_1^2+x_2^2}$ and $t_0\in [0,2\alpha\pi)$ satisfy 
	$(\alpha \cos \frac{t_0}{\alpha},\alpha\sin\frac{t_0}{\alpha})=(x_1,x_2)$. 
	Define $\phi_t(x)=(\alpha\cos \frac{t+t_0}{\alpha},\alpha\sin\frac{t+t_0}{\alpha}), t\in\R$. 
	The orbits of $\phi$ are circles $\{\alpha(\cos \frac{t}{\alpha},\sin\frac{t}{\alpha}),t\in\R\}$
	with $1\leq \alpha\leq 2$; see Figure \ref{fig2}.
	If $x,y\in A$
	are not in the same orbit of $\phi$, then for all $\delta>0$, there exists $\tau\in\R$ such that 
	$d(\phi_\tau(x),\phi_\tau(y))>\delta$ since $\phi_\R(x)$ and $\phi_\R(y)$
	are not the same periods. The kinematic expansivity of $\phi$ follows from the continuity of function $\sin^{-1}$.

	Let us consider a time change $\psi$ of $\phi$ defined as follows.
	For $x=(x_1,x_2)\in A$, choose $t_0\in[0,2\phi)$ such that
	$(x_1,x_2)=(\alpha\cos t_0, \alpha\sin t_0)$, where $\alpha=\sqrt{x_1^2+x_2^2}$. 
	Set $\psi_t(x)=\alpha(\cos (t+t_0),\sin(t+t_0)), t\in\R$. 
	Note that $\psi$ is a time change of $\phi$ due to 
	the fact that $\phi_\R(x)=\psi_\R(x)$ for all $x\in A$. 
	It is clear that $\psi$ is not separating (and hence $\phi$ is not strong separating, see Definition \ref{sdn}). Therefore, $\phi$ is kinematic expansive but not strong kinematic expansive.
\end{example}

\begin{remark} \rm It is worth mentioning that strong kinematic expansivity is more general than geometric expansivity. 
	This implies that fixed points of a (strong) kinematic expansive flow may not be isolated; see Example \ref{lorenz}. 
	Hence  the last statement in Remark 1.8.6 in \cite{fh} which claims that 
	fixed points of a kinematic expansive flow are isolated is not true; see Example \ref{khex2} for another counterexample. Kinematic expansive flows with isolated fixed points are considered in Section \ref{s4} with the name `KH-kinematic expansive flows'
\end{remark}

According to the proof of Theorem 6 in \cite{bw}, if $f: X\to [0,\infty), f(x)=1$ for all $x\in\R$,  
then it only needs the kinematic expansivity of $\sus^{\sigma,f}$ to obtain the
expansivity of homeomorphism $\sigma$. Similarly, if $f: X\to [0,\infty)$ is constant, we have
the following result. 
\begin{theorem}\label{constantf}
	Let $\sigma$ be a homeomorphism of a compact metric space $X$, and let $f: X\to [0,\infty)$ be
	constant. Then the following assertions are equivalent.
	
	(i) The homeomorphism $\sigma$ is  expansive.
	
	(ii) The suspension ${\rm sus}^{\sigma,f}$ is kinematic expansive.
	
	(iii) The suspension ${\rm sus}^{\sigma,f}$ is $C$-expansive.
\end{theorem}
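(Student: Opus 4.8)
The plan is to prove the cycle of implications (i) $\Rightarrow$ (iii) $\Rightarrow$ (ii) $\Rightarrow$ (i), since the implication (iii) $\Rightarrow$ (ii) is free: every $C$-expansive flow is in particular $K$-expansive (by Oka's theorem quoted above), and $K$-expansivity with $s \in K$ trivially specializes to kinematic expansivity by taking $s(t) = t$, the identity being an increasing homeomorphism. Thus the substantive content lies in (i) $\Rightarrow$ (iii) and in closing the loop with (ii) $\Rightarrow$ (i). Throughout I would write $f \equiv c$ for the constant height and normalize, if convenient, to $c = 1$; by Proposition \ref{sfp} the suspension $\sus^{\sigma,f}$ has no fixed points, so there are no singular-point complications to worry about and all the fixed-point-free equivalences are available.

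For (i) $\Rightarrow$ (iii), I would follow the classical argument of Bowen and Walters (their Theorem 6), which is precisely the statement that the suspension of an expansive homeomorphism under a continuous roof is $C$-expansive; here the roof is constant, a special case. The key idea is to transfer an expansive constant for $\sigma$ on $X$ up to an expansive constant for $\sus^{\sigma,f}$ on $X(\sigma,f)$. Given $\eps > 0$ and a reparametrization $s \in C$, the hypothesis $d\big(\sus_t^{\sigma,f}(x,u),\, \sus_{s(t)}^{\sigma,f}(y,v)\big) < \delta$ for all $t$ forces the two orbits to stay in a thin tube; because the roof is constant the flow crosses the base copy $X \times \{0\}$ at evenly spaced times, and by choosing $\delta$ small one reads off that the base points $\sigma^n(x)$ and $\sigma^n(y)$ remain within $\sigma$'s expansive constant for every $n \in \Z$. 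Expansivity of $\sigma$ then gives $x = y$ on the base, and matching the fiber coordinates yields $(y,v) = \sus_\tau^{\sigma,f}(x,u)$ for a small $\tau \in (-\eps,\eps)$, which is exactly $C$-expansivity.

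For (ii) $\Rightarrow$ (i), I would run the same comparison in reverse, using only the identity reparametrization, which is where the constancy of $f$ is essential. Suppose $\sus^{\sigma,f}$ is kinematic expansive and take two points $x,y \in X$ with $d(\sigma^n(x), \sigma^n(y)) < \eta$ for all $n \in \Z$; I want to conclude $x = y$. Lift $x,y$ to the base points $(x,0),(y,0) \in X(\sigma,f)$. Because $f$ is constant, the suspension flow advances both lifts through their successive base crossings in perfect synchrony at the same times $t = nc$, so a base-level closeness $d(\sigma^n(x),\sigma^n(y)) < \eta$ together with uniform continuity of the flow on the compact fiber yields $d\big(\sus_t^{\sigma,f}(x,0), \sus_t^{\sigma,f}(y,0)\big) < \delta$ for all real $t$ (not merely at integer-multiple times). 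Kinematic expansivity then gives $(y,0) = \sus_\tau^{\sigma,f}(x,0)$ for some small $\tau$; since both points sit on the base section $X \times \{0\}$ and the roof is constant, the only admissible $\tau$ is $\tau = 0$, forcing $x = y$, so $\sigma$ is expansive.

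The main obstacle I anticipate is the bookkeeping in the Bowen--Walters metric: making the estimates \emph{quantitative} so that base-level closeness and tube-level closeness are genuinely equivalent up to explicit constants. Concretely, one must control how the identification $(x,f(x)) \sim (\sigma(x),0)$ interacts with the metric near the base section, and verify that a reparametrization $s$ (in the $C$-expansive direction) cannot desynchronize the crossing times by more than a controlled amount—this is what lets the identity-synchronization argument survive the passage to general $s \in C$. The saving grace is that $f$ is constant, which removes the variable-return-time distortions that make the general variable-roof case delicate; I would lean on this heavily and cite \cite[Theorem 6]{bw} for the metric estimates rather than re-deriving them, reducing the proof to verifying that the constant-roof case only requires the weaker kinematic hypothesis in the (ii) $\Rightarrow$ (i) direction.
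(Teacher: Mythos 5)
Your proposal is correct and follows essentially the same route as the paper: the paper likewise obtains (i)$\Rightarrow$(iii) by citing \cite[Theorem 6]{bw}, gets (iii)$\Rightarrow$(ii) by specializing the reparametrization to the identity, and justifies (ii)$\Rightarrow$(i) by observing that, for a constant roof, the converse direction of the Bowen--Walters proof only uses the identity reparametrization, so kinematic expansivity already recovers expansivity of $\sigma$. Your explicit synchronization argument for (ii)$\Rightarrow$(i) simply spells out the detail that the paper delegates to that citation.
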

\begin{remark}\rm 
	From the previous theorem, it follows that
	if a flow is the suspension of a homeomorphism under a constant function, 
	then $C$-expansive, $K^*$-expansive, strong kinematic expansive, and kinematic expansive properties 
	are equivalent.
\end{remark} 

\subsection{$C$-separating,  geometric separating and separating flows}
\begin{definition}[\cite{artigue16,gura}]\label{sdn}\rm Let $\phi: X\to X$ be a continuous flow.
	
	(i)  $\phi$ is called {\em $C$-separating} if 
	there exists $\delta>0$ such that if $x,y\in X,s\in C$ and 
	\[d(\phi_t(x),\phi_{s(t)}(y))<\delta \quad \mbox{for all}\quad t\in \R,\]
	then $y=\phi_\tau(x)$ for some $\tau\in\R$.
	
	(ii)  $\phi$ is called {\em geometric separating} if 
	there exists $\delta>0$ such that if $x,y\in X,s\in K$ and 
	\[d(\phi_t(x),\phi_{s(t)}(y))<\delta \quad \mbox{for all}\quad t\in \R,\]
	then $y=\phi_\tau(x)$ for some $\tau\in\R$.
	
	(iii)  $\phi$ is called {\em separating} if  there exists $\delta>0$  such that if
	$x,y\in X$,
	\begin{equation}\label{sdn}
		d(\phi_t(x),\phi_t(y))<\delta \quad \mbox{for all}\quad t\in \R,
	\end{equation} 
	then $y=\phi_\tau(x)$ for some $\tau\in\R$.
	
	(iv) $\phi$ is called \textit{strong separating} if every time change is separating.
	
	Such a $\delta$ in (iii) is called a \textit{separating constant} of $\phi$.
\end{definition}
It is easy to see that 
\[C\mbox{-separation } \Rightarrow \mbox{ geometric separation}\Rightarrow\mbox{ strong separation }
\Rightarrow \mbox{ separation}.\]

\begin{remark}\rm (i) According to Remark 2.17 in \cite{artigue16}, if $\phi$ is separating then every fixed point $p$ of $\phi$ is dynamically isolated, i.e. 
	there is $r>0$ such that if $d(\phi_t(x),p)<r$ for all $t\in\R$, then $x=p$. Therefore $\fix(\phi)$ is finite.
	
	(ii) Like $C$-expansive flows, each fixed point of a $C$-separating flow is 
	an isolated point of the space, whereas a fixed point of a geometric separating flow may not be isolated.
	One example is the Lorenz attractor (see \cite{komuro}), which is geometric separating
	but the fixed points are non-isolated. Therefore, 
	the Lorenz attractor is not $C$-separating and fixed points of a (strong) separating flow may not be isolated.

	(iii) It is stated in Lemma 1.8.5 in \cite{fh}  that fixed points of a separating flow  are isolated. From (ii), it follows that  the statement is not true. For another counterexample, see Example \ref{khex2}. 
	Separating flows with isolated fixed points are KH-expansive flows (see Theorem \ref{khethm}).  
\end{remark}
\begin{theorem}
	If $\fix(\phi)$ is open, then $\phi$ is $C$-separating if and only if it is geometric separating. 
\end{theorem}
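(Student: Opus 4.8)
The forward implication is immediate: since $K\subset C$, any $C$-separating constant is at once a geometric separating constant, so only the reverse requires proof. Assume therefore that $\phi$ is geometric separating and that $\fix(\phi)$ is open. Geometric separation implies separation, so by the preceding remark every fixed point is dynamically isolated and $\fix(\phi)$ is finite; being finite and open, Lemma \ref{fiso} shows that each fixed point is an isolated point of $X$. The plan is to follow the scheme of Theorem \ref{KK*}: delete the isolated fixed points, settle the equivalence on the fixed-point-free remainder, and then recombine the two constants.

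Concretely, set $\widetilde X=X\setminus\fix(\phi)$ and $\psi=\phi|_{\widetilde X}$. Since $\fix(\phi)$ is open, $\widetilde X$ is closed in $X$, hence compact, and $\psi$ is a fixed-point-free continuous flow; as $\widetilde X$ is $\phi$-invariant, the geometric separation of $\phi$ restricts verbatim to geometric separation of $\psi$ with the same constant. \emph{Granting} that a fixed-point-free flow is $C$-separating if and only if it is geometric separating, let $\delta_1$ be a $C$-separating constant for $\psi$, pick $\delta_2>0$ with $B_{\delta_2}(p)=\{p\}$ for every $p\in\fix(\phi)$, and set $\delta=\min\{\delta_1,\delta_2\}$. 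If $x,y\in X$, $s\in C$ and $d(\phi_t(x),\phi_{s(t)}(y))<\delta$ for all $t\in\R$, then when $x,y\in\widetilde X$ the $C$-separation of $\psi$ yields $y=\psi_\tau(x)=\phi_\tau(x)$; and when $x$ or $y$ is fixed, evaluating at $t=0$ gives $d(x,y)=d(\phi_0(x),\phi_{s(0)}(y))<\delta\le\delta_2$, so isolation forces $x=y=\phi_0(x)$. Hence $\delta$ is a $C$-separating constant for $\phi$.

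The crux is thus the fixed-point-free equivalence, and it is the only place where a genuinely continuous (not monotone) reparametrization $s\in C$ must be handled. Here I would reuse the reparametrization device of Bowen and Walters from \cite[Theorem 2]{bw}, which passes from $C$-expansivity to $K$-expansivity in the absence of fixed points and uses no expansivity beyond the flow geometry. For a fixed-point-free flow on a compact space the orbit direction is uniformly defined through a finite cover by flow boxes; a continuous $s$ keeping $\phi_{s(\cdot)}(y)$ within $\delta$ of the forward-moving orbit $\phi_{\cdot}(x)$ cannot reverse the orientation along the orbit of $y$, so $s$ is forced to be monotone increasing and may be replaced by an increasing homeomorphism $\bar s\in K$ for which $\phi_{\bar s(\cdot)}(y)$ remains close to $\phi_{\cdot}(x)$. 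Applying geometric separation to $\bar s$ then gives $y=\phi_\tau(x)$, i.e.\ $C$-separation. The hard part, exactly as in \cite{bw,oka90}, is the uniform bookkeeping: extracting from compactness and fixed-point-freeness constants that bound the flow-box sizes and propagate the monotonicity of $\bar s$ across all of $\R$ while preserving $\delta$-closeness. I expect this estimate, rather than the gluing of Paragraph two, to be the main obstacle.
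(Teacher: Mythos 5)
Your proposal is correct and takes essentially the same approach as the paper: the paper's proof is a one-line reference to the scheme of Theorem \ref{KK*} --- restrict to the compact, invariant, fixed-point-free set $X\setminus\fix(\phi)$, invoke the fixed-point-free equivalence there, and glue the resulting constant with one coming from the isolation of the fixed points --- which is exactly your second paragraph. Your third paragraph supplies, in sketch form via the Bowen--Walters monotonization, the fixed-point-free ingredient that plays the role Lemma \ref{ld} (Oka's lemma) plays in Theorem \ref{KK*}, a step the paper leaves implicit in the phrase ``similar to''.
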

\begin{proof}
	The proof is similar to that of Theorem \ref{KK*}.
\end{proof}

Although the fixed points set of a separating flow may be non-open, the property in Proposition \ref{fspn} still holds.

\begin{proposition}\label{sfl}\it
	A separating flow does not have periodic points with arbitrarily small periods.
\end{proposition}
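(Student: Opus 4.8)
The plan is to argue by contradiction, extract a convergent subsequence of the offending periodic points, identify its limit as a fixed point, and then use the separating constant to show that such a fixed point would have to absorb the nearby periodic orbits — which is impossible.

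Concretely, suppose $\phi$ admits periodic points $x_n$ with periods $T_n\to 0^+$. Since $X$ is compact, after passing to a subsequence I may assume $x_n\to x_0$ for some $x_0\in X$. First I would show that $x_0\in\fix(\phi)$ by exactly the arithmetic argument used in the proof of Proposition \ref{fspn}: fixing $t\in\R$ and choosing for each $n$ the integer $j_n$ with $0<j_nT_n-t<T_n$, one has $j_nT_n\to t$ while $\phi_{j_nT_n}(x_n)=x_n$ because $j_nT_n$ is an integer multiple of the period $T_n$; joint continuity of the flow together with $x_n\to x_0$ then yields $\phi_t(x_0)=x_0$. Note that, in contrast with Proposition \ref{fspn}, the conclusion that $x_0$ is fixed is not yet a contradiction, since for a separating flow $\fix(\phi)$ need not be open.

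The crux is then to exploit the separating constant $\delta>0$. Taking $y=x_0$ in Definition \ref{sdn}(iii) and using that the orbit of the fixed point $x_0$ is the single point $\{x_0\}$, one obtains the dynamical isolation of $x_0$: if $d(\phi_t(x),x_0)<\delta$ for all $t\in\R$, then $x=x_0$ (this is the Remark following Definition \ref{sdn}). On the other hand, I would invoke uniform continuity of $\phi$ on a compact time window to get a modulus $\omega$ with $\sup_{x\in X}\sup_{0\le s\le\eta}d(\phi_s(x),x)=\omega(\eta)\to 0$ as $\eta\to 0^+$. Since each $x_n$ is periodic with period $T_n$, every point of its orbit satisfies $d(\phi_t(x_n),x_n)\le\omega(T_n)$, so $d(\phi_t(x_n),x_0)\le\omega(T_n)+d(x_n,x_0)$ for all $t\in\R$. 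As both terms tend to $0$, for all large $n$ the entire orbit of $x_n$ lies within $\delta$ of $x_0$; dynamical isolation then forces $x_n=x_0$, contradicting that $x_n$ is a periodic (hence regular) point.

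The main obstacle, compared with Proposition \ref{fspn}, is precisely that openness of $\fix(\phi)$ is unavailable, so the compactness-of-$\widetilde X$ shortcut and Lemma \ref{equilm} cannot be used; the separating hypothesis must itself supply the isolation of the limit fixed point. A minor technical point to handle carefully is that the diameter estimate on the periodic orbits must be uniform in $n$, which is why I pass through the global modulus of continuity $\omega$ obtained from uniform continuity on $[0,1]\times X$ rather than estimating orbit by orbit.
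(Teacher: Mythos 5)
Your proof is correct, and it is more self-contained than the paper's: the paper does not prove Proposition \ref{sfl} internally but cites Lemma 1.8.5 of \cite{fh}, supplemented by exactly the fact you also use, namely that a limit of periodic points whose periods tend to $0$ must be a fixed point (extracted from the proof of Proposition \ref{fspn}). What you add is the rest of the argument: dynamical isolation of the limit $x_0$, obtained by putting $y=x_0$ in Definition \ref{sdn}(iii) (so that $x_0=\phi_\tau(x)$ forces $x=\phi_{-\tau}(x_0)=x_0$), together with a uniform modulus of continuity for $\phi$ on $[0,1]\times X$ giving $d(\phi_t(x_n),x_n)\le\omega(T_n)$ for all $t\in\R$, whence the whole orbit of $x_n$ eventually lies in the $\delta$-ball about $x_0$ and $x_n=x_0$, contradicting that $x_n$ is not fixed. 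This buys something concrete: the paper itself notes (in the remark following Definition \ref{sdn}, item (iii)) that the statement of the very lemma it cites contains a false claim (that fixed points of separating flows are isolated), so a citation-free argument like yours removes any ambiguity about which portion of \cite{fh} is actually being invoked. Your care on the two delicate points --- handling the case $t=jT_n$ in the limit argument, and making the orbit-diameter estimate uniform in $n$ via the global modulus $\omega$ rather than orbit by orbit --- is exactly where such a proof could otherwise go wrong, and both are handled correctly.
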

\begin{proof} See Lemma 1.8.5 in \cite{fh} for a proof, using the fact from the proof of Proposition \ref{fspn} that if 
	periodic points $(x_n)$ with periods $(t_n)$ and $x_n\to x_0$, $t_n\to 0^+$ as $n\to\infty$, then $x_0\in\fix(\phi)$.
\end{proof}


\begin{remark}\label{srm}\rm Analogous to Theorem \ref{constantf}, in the case that a flow $\phi$ is
	the suspension of a homeomorphism under a constant time function,
	if  $\phi$ is separating, then it is $C$-separating. This holds if and only if the base homeomorphism is separating. 	
\end{remark}

%
\subsection{KH-expansive flows}

In \cite{kh} Katok and Hasselblatt introduced a definition of expansive flows, which is called KH-expansivity.
\begin{definition}\rm \label{khedn}
	Let $(X,d)$ be a compact space. 
	A continuous flow $\phi_t:X\rightarrow X$ is called {\em KH-expansive} if there exists $\delta>0$ such that if $x,y\in X, s\in C$ such that
	\begin{equation*}
		\max\{d(\phi_t(x),\phi_{s(t)}(x)),	d(\phi_t(x),\phi_{s(t)}(y))\}<\delta\quad 
		\mbox{for all}\quad t\in\R,
	\end{equation*}
	then $y=\phi_\tau(x)$ for some $\tau\in\R$. Such a $\delta$ is called a \textit{separating constant} of $\phi$. 
\end{definition}

It is obvious that $C$-expansivity implies KH-expansivity.

\begin{proposition}\label{iso}\it If $\phi$ is KH-expansive on $X$, then each fixed point of $\phi$ is an isolated point of $X$.
	Therefore,  $\fix(\phi)$ is open and finite. 
\end{proposition}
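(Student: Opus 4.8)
The plan is to let the fixed point itself play the role of $x$ in the KH-expansivity condition; this choice, combined with the constant reparametrization $s\equiv 0$, collapses both distances in the defining maximum to something trivially controllable, so that the whole hypothesis reduces to a single inequality.

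Concretely, I would fix a separating constant $\delta>0$, pick an arbitrary $p\in\fix(\phi)$, and set $x=p$ together with $s\equiv 0$, noting that $s\in C$ since the zero function is continuous and vanishes at $0$. Because $p$ is fixed, $\phi_t(x)=\phi_{s(t)}(x)=p$ for every $t$, so the first distance $d(\phi_t(x),\phi_{s(t)}(x))$ is identically $0$; and since $\phi_{s(t)}(y)=\phi_0(y)=y$, the second distance is the constant $d(p,y)$, independent of $t$. Hence for any $y$ with $d(p,y)<\delta$ one has
\[\max\{d(\phi_t(x),\phi_{s(t)}(x)),\,d(\phi_t(x),\phi_{s(t)}(y))\}=d(p,y)<\delta\quad\text{for all } t\in\R,\]
and KH-expansivity yields $y=\phi_\tau(p)$ for some $\tau\in\R$, i.e.\ $y=p$. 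Thus $B_\delta(p)=\{p\}$, so $p$ is an isolated point of $X$; as $p$ was arbitrary, every fixed point is isolated.

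To deduce the final assertion I would argue that $\fix(\phi)=\bigcap_{t\in\R}\{x\in X:\phi_t(x)=x\}$ is closed, being an intersection of closed sets (by continuity of each $\phi_t$), and hence compact as a closed subset of the compact space $X$. Since every point of $\fix(\phi)$ is isolated, the singletons $\{p\}$, $p\in\fix(\phi)$, form an open cover of $\fix(\phi)$, so compactness forces $\fix(\phi)$ to be finite. Finiteness together with the isolation already established then gives openness of $\fix(\phi)$ directly from Lemma \ref{fiso}.

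The argument involves no genuine obstacle: the single idea is that placing the fixed point in the slot of $x$ makes $\phi_{s(t)}(x)=\phi_t(x)$ automatically, so the first term of the maximum vanishes regardless of $s$, and the trivial choice $s\equiv 0$ then freezes the orbit of $y$ and reduces the entire hypothesis to $d(p,y)<\delta$. The only step requiring care is the passage from \emph{each point isolated} to \emph{the set is open}, which needs finiteness and is supplied by the compactness argument together with Lemma \ref{fiso}.
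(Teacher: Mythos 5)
Your proof is correct and follows essentially the same route as the paper: the key step --- taking the fixed point $p$ itself in the role of $x$ with the zero reparametrization $s\equiv 0$, so that the KH-expansivity hypothesis collapses to $d(p,y)<\delta$ and forces $B_\delta(p)=\{p\}$ --- is identical to the paper's argument. The only cosmetic difference is in deducing finiteness: you use closedness of $\fix(\phi)$ plus an open-cover compactness argument, whereas the paper extracts a convergent sequence of distinct fixed points whose limit would be a non-isolated fixed point; both are routine consequences of compactness.
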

\begin{proof}
	Fix $x\in \fix(\phi)$ and let $\delta>0$
	be a separating constant of $\phi$. For $y\in B_\delta(x)$, putting $s(t)=0$ for all $t\in\R$, we have
	$d(\phi_t(x),\phi_{s(t)}(x))=d(x,x)=0$ and 
	$d(\phi_t(x), \phi_{s(t)}(y))=d(x,y)<\delta$ for all $t\in\R$. This yields $
	y=\phi_\tau(x)=x$ for some $\tau\in\R$. Consequently, $B_\delta(x)=\{x\}$ and thus $x$ is an isolated point of $X$.
	For the latter, suppose that $(x_n)\in \fix(\phi)$ and $x_n\ne x_m$ for $m\ne n$. 
	Since $X$ is compact, we may assume that $x_n\to x$ as $n\to \infty$ for some $x\in X$. 
	Owing to the continuity of $\phi$,  $\phi_t(x_n)\to \phi_t(x)$ as $n\to\infty$ for all $t\in\R$. Using $\phi_t(x_n)=x_n$ for all $n\geq 1$ and all $t\in\R$, we obtain
	$\phi_t(x)=x$ for all $t\in\R$, i.e. $x\in \fix(\phi)$ is not an isolated point of $X$, which contracts the former.
\end{proof}
\begin{theorem}\label{khethm} The following assertions are equivalent.
	
	(i) $\phi$ is KH-expansive.
	
	(ii) $\phi$ is separating and $\fix(\phi)$ is open.
	
	(iii) $\phi$ is separating and each fixed point of $\phi$ is an isolated point.
\end{theorem}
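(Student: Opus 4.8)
$(i)\Rightarrow(ii)\Rightarrow(iii)\Rightarrow(i)$, with Lemma~\ref{fiso} bridging the two formulations of ``isolated'' and ``open'' for the finite fixed point set. The implications $(ii)\Leftrightarrow(iii)$ are essentially free: by Remark~(i) following Definition~\ref{sdn}, a separating flow has only finitely many fixed points, so Lemma~\ref{fiso} immediately converts ``$\fix(\phi)$ open'' into ``each fixed point is isolated'' and back. Thus the real content lies in $(i)\Rightarrow(ii)$ and $(iii)\Rightarrow(i)$.

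\textbf{The direction $(i)\Rightarrow(ii)$.} Suppose $\phi$ is KH-expansive with separating constant $\delta$. That $\fix(\phi)$ is open (indeed each fixed point is isolated) is already Proposition~\ref{iso}, so I would simply invoke it. It remains to show separation. Here I take the \emph{same} $\delta$ as a candidate separating constant: if $x,y\in X$ satisfy $d(\phi_t(x),\phi_t(y))<\delta$ for all $t\in\R$, I set $s(t)=t$ (the identity, which lies in $C$). Then $d(\phi_t(x),\phi_{s(t)}(x))=d(\phi_t(x),\phi_t(x))=0<\delta$ and $d(\phi_t(x),\phi_{s(t)}(y))=d(\phi_t(x),\phi_t(y))<\delta$ for all $t$, so the KH-expansive hypothesis delivers $y=\phi_\tau(x)$ for some $\tau\in\R$. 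Hence $\phi$ is separating. This direction is short.

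\textbf{The direction $(iii)\Rightarrow(i)$ is the main obstacle.} Assume $\phi$ is separating with separating constant $\delta_0$ and that every fixed point is isolated. I must produce a KH-separating constant $\delta$, i.e. control pairs $(x,y)$ and reparametrizations $s\in C$ satisfying $\max\{d(\phi_t(x),\phi_{s(t)}(x)),d(\phi_t(x),\phi_{s(t)}(y))\}<\delta$ for all $t$. The difficulty is that KH-expansivity allows an \emph{arbitrary continuous} $s\in C$ (not necessarily monotone), and the first term $d(\phi_t(x),\phi_{s(t)}(x))<\delta$ only says the reparametrized orbit of $x$ stays near the genuine orbit of $x$; I cannot directly compare $\phi_t(x)$ with $\phi_t(y)$ as separation requires. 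The plan is a two-step reduction. First, from $d(\phi_t(x),\phi_{s(t)}(x))<\delta$ for all $t$, I want to argue that $x$ is forced to lie near $\fix(\phi)$ or that $s(t)-t$ stays suitably controlled; the key tool is Lemma~\ref{equilm}, whose item (ii) (available because $\fix(\phi)$ is open) gives a uniform lower bound $d(\phi_T(x),x)>\xi$ for regular $x$ and small positive $T$. Combining this with Proposition~\ref{sfl} (no arbitrarily small periods) should prevent the reparametrized orbit from drifting by a nonzero amount while staying $\delta$-close. Second, having tamed the self-comparison, I transfer the closeness of $\phi_t(x)$ and $\phi_{s(t)}(y)$ into a genuine orbit comparison that the separating constant $\delta_0$ can handle, yielding $y=\phi_\tau(x)$.

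\textbf{Where I expect trouble, and the fallback.} The delicate point is making the reduction uniform in $(x,y,s)$ so that a single $\delta$ works, especially near fixed points where $s$ can behave wildly yet orbits are nearly constant. I anticipate splitting into cases according to whether $x$ (equivalently $y$) is close to $\fix(\phi)$: near an isolated fixed point, choosing $\delta$ below the isolation radius collapses the problem as in Proposition~\ref{iso}, forcing $x=y$; away from $\fix(\phi)$, the uniform estimate of Lemma~\ref{equilm}(ii) plus compactness of $X\setminus\fix(\phi)$ lets me bound the reparametrization and reduce to the separating hypothesis on $X$. The cleanest route, and the one I would write up, is to prove $(i)\Leftrightarrow(ii)$ as above and then derive $(iii)$ from $(ii)$ purely via Lemma~\ref{fiso} and the finiteness of $\fix(\phi)$, so that the hard analytic work is concentrated entirely in the single implication $(ii)\Rightarrow(i)$.
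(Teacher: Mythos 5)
Your handling of the easy implications is correct and in fact more self-contained than the paper's: for $(i)\Rightarrow(ii)$ you combine Proposition \ref{iso} with the identity reparametrization $s(t)=t$, and for $(ii)\Leftrightarrow(iii)$ you use the finiteness of $\fix(\phi)$ for separating flows (noted after Definition \ref{sdn}) together with Lemma \ref{fiso}, which is exactly what the paper does. Be aware, though, that the paper disposes of $(i)\Leftrightarrow(ii)$ entirely by citing \cite[Theorem 2.9]{artigue18}, so the only part of the theorem with real analytic content is outsourced there.

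That content is precisely where your proposal has a genuine gap: for $(ii)\Rightarrow(i)$ you offer a plan, not a proof, and the pivotal step is the one left unestablished (``should prevent the reparametrized orbit from drifting'', ``I anticipate splitting into cases''). What is needed is the following uniform drift bound, which appears in the paper as Lemma \ref{deltalm} (proved there to serve Theorem \ref{khthm}): if $\fix(\phi)$ is open, then for each $\eps>0$ there is $\delta>0$ such that for every $x\notin\fix(\phi)$ and every $s\in C$ with $d(\phi_t(x),\phi_{s(t)}(x))<\delta$ for all $t\in\R$, one has $|s(t)-t|<\eps$ for all $t\in\R$. Its proof is a compactness argument you never carry out: take $\delta_n\to 0$, regular $x_n$ and $s_n\in C$ violating the bound, use the intermediate value theorem to find $u_n$ with $|s_n(u_n)-u_n|=\eps$, pass to a limit $a$ of $a_n=\phi_{u_n}(x_n)$ in the compact set $X\setminus\fix(\phi)$, and conclude $\phi_{\eps}(a)=a$ (after fixing the sign along a subsequence), which for $\eps<T_*$ contradicts Lemma \ref{equilm}(ii). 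Note also that your appeal to Proposition \ref{sfl} is a red herring: $\eps$ is fixed in this argument, so no arbitrarily small period ever arises; the contradiction must come from Lemma \ref{equilm} alone. Once the drift lemma is in hand, your plan does close exactly as you outline: $|s(t)-t|<\eps$ forces $s$ to be surjective, the triangle inequality gives $d(\phi_u(x),\phi_u(y))<2\delta$ for all $u\in\R$, and separation (with $2\delta$ below the separating constant) yields $y=\phi_\tau(x)$; the case $x\in\fix(\phi)$ is handled, as you say, by taking $\delta$ below the isolation radius, since $d(x,y)=d(\phi_0(x),\phi_{s(0)}(y))<\delta$. Alternatively, you could simply have cited \cite[Theorem 2.9]{artigue18} for this direction, as the paper itself does.
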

\begin{proof} 
	$(i)\Leftrightarrow (ii)$: This is \cite[Theorem 2.9]{artigue18}.
	$(ii)\Leftrightarrow (iii)$: This follows from Lemma \ref{fiso} 
	and the fact that $\fix(\phi)$ is finite.
\end{proof}
\begin{remark}\rm
	A $C$-expansive flow has finitely many periodic orbits with periods less than a given number.
	In general a KH-expansive flow does not have this property. Consider the flow in Example \ref{kex3},
	which has uncountable periodic orbits with periods smaller than $4\pi$ . However, as  separating flows, a KH-expansive flow does not have periodic orbits with arbitrarily small periods; see Proposition \ref{sfl}. 
\end{remark}
%
%
%
%
%
%
%
%

\section{KH-kinematic expansive flows}\label{s4}
In this section we introduce a new notion of expansive flows, which we call `KH-kinematic expansivity'.
Some equivalent properties are presented for this expansivity.
Since KH-kinematic expansivity is not an invariant property under time change of flows,
the concept `strong KH-kinematic expansivity' is needed.
Hierarchy of expansive flows is given with counterexamples to analyse the relations of expansive properties.

\subsection{KH-kinematic expansive flows}

The following definition is natural.

\begin{definition}\rm \label{khkedn}
	Let $(X,d)$ be a compact space. A continuous flow $\phi:\R \times X\to X$ is called {\em KH-kinematic expansive} if for every $\eps>0$, there exists $\delta>0$ such that if $x,y\in X, s\in C$ satisfying
	\begin{equation*}
		\max\{d(\phi_t(x),\phi_{s(t)}(x)),	d(\phi_t(x),\phi_{s(t)}(y))\}<\delta\quad 
		\mbox{for all}\quad t\in\R,
	\end{equation*}
	then $y=\phi_\tau(x)$ for some $\tau\in (-\eps,\eps)$.

\end{definition}

It is clear that a $C$-expansive flow is KH-kinematic expansive.

%
%

\begin{theorem}\label{khthm} Let $\phi=(\phi_t)_{t\in\R}$ be a continuous flow on $X$. Then
	the following assertions are equivalent.
	
	(i) $\phi$ is KH-kinematic expansive.
	
	(ii) $\phi$ is KH-expansive and kinematic expansive.
	
	(iii) $\phi$ is  kinematic expansive and $\fix(\phi)$ is open.
	
	(iv) $\phi$ is kinematic expansive and each $x\in\fix(\phi)$ is an isolated point of $X$.

\end{theorem}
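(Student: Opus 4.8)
The plan is to establish the cycle (i) $\Rightarrow$ (ii) $\Rightarrow$ (iii) $\Rightarrow$ (i) together with the equivalence (iii) $\Leftrightarrow$ (iv). The implication (i) $\Rightarrow$ (ii) is the routine one: specializing Definition \ref{khkedn} to $s=\mathrm{id}\in C$ collapses the first entry of the maximum to $0$ and leaves exactly the hypothesis of kinematic expansivity, so (i) forces kinematic expansivity; and fixing any single value of $\eps$ (say $\eps=1$) in Definition \ref{khkedn} produces a separating constant in the sense of Definition \ref{khedn}, so (i) forces KH-expansivity. Hence (i) $\Rightarrow$ (ii).

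For (ii) $\Leftrightarrow$ (iii) I would argue as follows. If (ii) holds then KH-expansivity gives, by Proposition \ref{iso}, that $\fix(\phi)$ is open, so (iii) holds. Conversely, kinematic expansivity trivially implies separation (its conclusion $y=\phi_\tau(x)$ with $\tau\in(-\eps,\eps)$ is in particular a $\tau\in\R$, so any kinematic constant serves as a separating constant), whence (iii) says $\phi$ is separating with $\fix(\phi)$ open; Theorem \ref{khethm} then yields KH-expansivity and gives (ii). For (iii) $\Leftrightarrow$ (iv), the key remark is that kinematic expansivity alone forces $\fix(\phi)$ to be finite: were it infinite, compactness would produce distinct fixed points $x_n\to x_0$, and since $\fix(\phi)$ is closed $x_0$ is fixed too, so $d(\phi_t(x_n),\phi_t(x_0))=d(x_n,x_0)$ stays below any kinematic constant for large $n$, forcing $x_n=\phi_\tau(x_0)=x_0$, a contradiction. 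Once $\fix(\phi)$ is finite, Lemma \ref{fiso} makes ``open'' and ``consisting of isolated points'' interchangeable, i.e. (iii) $\Leftrightarrow$ (iv).

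The substantial direction is (iii) $\Rightarrow$ (i). Fix $\eps>0$; let $\delta_1$ be a kinematic expansive constant for this $\eps$, and let $\delta_2>0$ satisfy $B_{\delta_2}(p)=\{p\}$ for every $p\in\fix(\phi)$ (available from finiteness, openness, and Lemma \ref{fiso}). I would take $\delta\le\tfrac12\min\{\delta_1,\delta_2\}$, small enough for the control step below, and suppose $x,y\in X$, $s\in C$ satisfy the KH-condition with constant $\delta$. If $x\in\fix(\phi)$ the first entry vanishes and the second gives $\phi_{s(t)}(y)\in B_\delta(x)=\{x\}$ for all $t$; evaluating at $t=0$ and using $s(0)=0$ yields $y=\phi_0(y)=x$. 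For regular $x$, the two inequalities combine through the triangle inequality to $d(\phi_{\sigma}(x),\phi_{\sigma}(y))<2\delta$ for every $\sigma$ in the range $s(\R)$. The whole point is to upgrade this to all $\sigma\in\R$: once $d(\phi_\sigma(x),\phi_\sigma(y))<2\delta\le\delta_1$ holds for all $\sigma\in\R$, kinematic expansivity immediately delivers $y=\phi_\tau(x)$ with $\tau\in(-\eps,\eps)$, which is exactly (i).

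The main obstacle is therefore showing that, for a regular point $x$, the first inequality $d(\phi_t(x),\phi_{s(t)}(x))<\delta$ forces the range $s(\R)$ to be all of $\R$. Here the openness of $\fix(\phi)$ is essential and enters through Lemma \ref{equilm}: it prevents $s$ from ``stalling'', for if $s$ were almost constant over a long $t$-interval while $\phi_t(x)$ advances through regular points, then $\phi_t(x)$ would have to stay within $\delta$ of a single point over a time exceeding some $T\in(0,T_*)$, contradicting the uniform bound $d(\phi_T(z),z)>\xi$ for $z\notin\fix(\phi)$ once $2\delta<\xi$; Proposition \ref{fspn} (no arbitrarily small periods) is used in the same spirit. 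The delicate sub-case is that of a regular but recurrent $x$, where a priori $\phi_\R(x)$ could be confined to a bounded tube around a compact orbit arc with $s$ bounded; excluding this, and thereby securing $s(\R)=\R$, is the technical heart of the argument and is precisely where openness of the fixed-point set does the work (in contrast to the Lorenz-type geometric expansive flows of Example \ref{lorenz}, whose non-isolated fixed points make this step fail). Once $s(\R)=\R$ is in hand, the proof closes as indicated above.
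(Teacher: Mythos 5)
Your overall architecture matches the paper's: the cycle (i) $\Rightarrow$ (ii) $\Rightarrow$ (iii) $\Rightarrow$ (i) plus (iii) $\Leftrightarrow$ (iv), with (i) $\Rightarrow$ (ii) by specializing $s=\mathrm{id}$, (ii) $\Rightarrow$ (iii) via Proposition \ref{iso}, (iii) $\Leftrightarrow$ (iv) via finiteness of $\fix(\phi)$ and Lemma \ref{fiso}, and the same $\delta=\min\{\delta_1,\delta_2,\dots\}/2$ and triangle-inequality setup for (iii) $\Rightarrow$ (i). All of that is correct. But there is a genuine gap exactly at the step you yourself label ``the technical heart'': you never prove that, for a regular point $x$, the bound $d(\phi_t(x),\phi_{s(t)}(x))<\delta$ for all $t\in\R$ forces $s(\R)=\R$. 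You sketch a ``stalling'' argument and then explicitly set aside the ``delicate sub-case'' of a recurrent $x$ with bounded $s$ as something to be excluded, without excluding it. That sub-case is precisely what your sketch cannot rule out: if $s(\R)\subset[-M,M]$, the hypothesis only says the whole orbit of $x$ stays in the $\delta$-neighbourhood of the compact arc $\phi_{[-M,M]}(x)$, and nothing in your outline contradicts this configuration. A statement that a step is the technical heart is not a proof of that step, so your argument for (iii) $\Rightarrow$ (i) does not close.

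The missing idea is the paper's Lemma \ref{deltalm}, which proves the stronger assertion that $|s(t)-t|<\eps$ for all $t\in\R$ (surjectivity is then immediate, since $s$ is continuous and $s(t)-t$ is bounded, so $s(t)\to\pm\infty$ as $t\to\pm\infty$). The essential device is an intermediate value theorem argument applied to $h(t)=s(t)-t$: $h$ is continuous with $h(0)=0$, so if $|h(t_0)|\geq\eps$ for some $t_0$, there is a crossing point $u$ with $h(u)=\eps$ or $h(u)=-\eps$ exactly. At such a point $d(\phi_u(x),\phi_{u\pm\eps}(x))<\delta$, i.e.\ some regular point $a$ on the orbit of $x$ satisfies $d(a,\phi_{\eps}(a))<\delta$. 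Choosing $\eps<T_*$ and $\delta$ below the constant $\xi$ furnished by Lemma \ref{equilm} (this is where openness of $\fix(\phi)$ enters, via compactness of $X\setminus\fix(\phi)$ in the paper's sequential formulation) gives a contradiction, uniformly in $x$ and $s$, with no case distinction between recurrent and non-recurrent orbits. Your ``stalling'' heuristic gestures at Lemma \ref{equilm} but applies it to the wrong quantity (near-constancy of $s$ on long intervals) rather than to the exact crossing $s(u)-u=\pm\eps$; that is why your version runs into the bounded-range sub-case while the paper's does not. With Lemma \ref{deltalm} (or the IVT argument just described) inserted, the rest of your proof is correct and coincides with the paper's.
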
	
\noindent\textit{Proof.}  
$(i)\Rightarrow (ii)$: Suppose that $\phi$ is KH-kinematic. The former of (ii) is clear. For the latter, take $s(t)=t$ for all $t\in\R$ in Definition \ref{khkedn} to obtain the kinematic expansivity. 

$(ii)\Rightarrow (iii)$: This follows from propositions  \ref{iso}.

$(iii)\Leftrightarrow (iv)$: This is a consequence of Lemma \ref{fiso}, noting that $\fix(\phi)$ is finite if $\phi$ is kinematic expansive.

In order to show $(iii)\Rightarrow (i)$, we need the  following lemma. 
The next result states that if a  regular orbit, which is reparameterized, is close 
to the original one in the whole time, then the reparameterization must be close to the identity.
\begin{lemma}\label{deltalm} Suppose that $\fix(\phi)$ is open. 
	For each $\eps>0$, there exists $\delta>0$ such that if $x\in X\setminus \fix(\phi)$, $s\in C$
	satisfying  $d(\phi_t(x),\phi_{s(t)}(x))<\delta\quad\mbox{for all}\quad t\in\R $,
	then  $|s(t)-t|<\eps$ {for all} $t\in\R.$
\end{lemma}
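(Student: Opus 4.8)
The plan is to argue by contradiction and to reduce everything to the displacement estimate of Lemma \ref{equilm}, which is exactly the reformulation of the hypothesis that $\fix(\phi)$ is open. First I would observe that it suffices to treat small $\eps$: if the statement holds for every $\eps\in(0,T_*)$, where $T_*$ is the constant furnished by Lemma \ref{equilm}, then for $\eps\ge T_*$ one simply reuses the $\delta$ obtained for $\eps'=T_*/2$, since then $|s(t)-t|<\eps'<\eps$. So fix $\eps\in(0,T_*)$. Applying Lemma \ref{equilm}(ii) with $T=\eps$ yields a $\xi>0$ with $d(\phi_\eps(z),z)>\xi$ for every regular point $z$, and I would then set $\delta=\xi$ and claim this choice works.

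The central observation is that a naive pointwise estimate is \emph{not} available: from $d(\phi_t(x),\phi_{s(t)}(x))<\delta$ at a single time $t$ one cannot conclude $|s(t)-t|<\eps$, because a regular point may be periodic and the offset $s(t)-t$ could sit near a period, keeping the two images close while the offset is large. Hence the proof must exploit the continuity of $s$ together with the normalization $s(0)=0$. I would set $\tau(t)=s(t)-t$, a continuous function with $\tau(0)=0$, and suppose for contradiction that $|\tau(t_1)|\ge\eps$ for some $t_1$. Since $t\mapsto|\tau(t)|$ is continuous, vanishes at $0$, and is at least $\eps$ at $t_1$, the intermediate value theorem produces a time $t_0$ between $0$ and $t_1$ with $|\tau(t_0)|=\eps$, that is, $s(t_0)=t_0\pm\eps$.

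The final step is to evaluate the hypothesis at this $t_0$. Writing $y=\phi_{t_0}(x)$, which is regular because $x$ is (a fixed point has a fixed orbit, so if $y$ were fixed then $x$ would be too), we get $\phi_{s(t_0)}(x)=\phi_{\pm\eps}(y)$, and therefore $d(y,\phi_{\pm\eps}(y))<\delta=\xi$. In the $+$ case this directly contradicts $d(\phi_\eps(y),y)>\xi$. In the $-$ case I would put $w=\phi_{-\eps}(y)$, which is regular and satisfies $\phi_\eps(w)=y$, so that $d(w,\phi_\eps(w))=d(y,\phi_{-\eps}(y))<\xi$, again contradicting Lemma \ref{equilm}. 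Either way we reach a contradiction, so $|\tau(t)|<\eps$ for all $t$, as required. The only delicate point---and the step I expect to be the main obstacle---is precisely this realization that periodicity blocks a one-shot estimate, which forces the intermediate-value reduction to an \emph{exact} offset $\pm\eps$, the single place where the uniform displacement bound of Lemma \ref{equilm} can be brought to bear.
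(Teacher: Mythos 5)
Your proof is correct, and it rests on the same two pillars as the paper's: the intermediate value theorem applied to $\tau(t)=s(t)-t$ to pin the offset at exactly $\pm\eps$, and the uniform displacement bound of Lemma \ref{equilm} for regular points. The execution, however, is genuinely more direct. The paper argues by contradiction with a sequence $\delta_n\to 0$: it extracts points $a_n=\phi_{u_n}(x_n)$ where the offset equals $\eps$, uses compactness of $\widetilde X=X\setminus\fix(\phi)$ (closed because $\fix(\phi)$ is open) to pass to a limit $a\in\widetilde X$, and concludes $\phi_\eps(a)=a$, contradicting the property of $T_*$; the sign ambiguity is absorbed by a subsequence argument, and no explicit $\delta$ is produced. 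You instead take the quantitative constant $\xi$ from Lemma \ref{equilm}(ii) with $T=\eps$ and declare $\delta=\xi$ outright, so the contradiction $d(y,\phi_{\pm\eps}(y))<\xi$ versus $d(\phi_\eps(z),z)>\xi$ happens pointwise, with the minus sign handled cleanly by the substitution $w=\phi_{-\eps}(y)$. What your route buys is an explicit expansive constant and the elimination of all sequential compactness; what the paper's route buys is that it only ever needs the qualitative consequence that a regular point cannot satisfy $\phi_\eps(a)=a$, with the quantitative content of Lemma \ref{equilm} entering only through the limit. Your preliminary reduction to $\eps\in(0,T_*)$ and your observation that periodicity forbids a one-shot pointwise estimate (forcing the use of continuity and $s(0)=0$) are both accurate and worth keeping.
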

\begin{proof} Let $T_*>0$  be in Lemma \ref{equilm} and let $\widetilde X=X\setminus \fix(\phi)$. 
	Suppose in contrary that there is  $0<\eps<T_*$ such that
	for $\delta_n\to 0$, there are $x_n\in \widetilde X,s_n\in C$,
	\begin{equation}\label{phis}
		d(\phi_t(x_n), \phi_{s_n(t)}(x_n))<\delta_n\quad \text{for all}\quad t\in\R 
	\end{equation} 
	but 
	\begin{equation}\label{s(t)} |s_n(t_n)-t_n|>\eps \quad\mbox{for some}\quad t_n\in \R.
	\end{equation}  
	Fix $n\in \N$ and let $h_n(t)=s_n(t)-t$ for all $t\in\R$. Then $h_n:\R\to\R$ is continuous and $h_n(0)=0$.
	It follows from \eqref{s(t)} that there exists $u_n\in (-|t_n|,|t_n|)$ such that $|h_n(u_n)|=\eps$.
	There exists a subsequence of $u_n$, which is not renumerated, such that $h_n(u_n)=\eps$ for all $n$
	or $h_n(u_n)=-\eps$ for all $n$. Without loss of generality, we may assume that $h_n(u_n)=\eps$ for all $n$. 
	Let $a_n=\phi_{u_n}(x_n)$ and $b_n=\phi_{s_n(u_n)}(x_n)$. Since $\widetilde X$ is compact, 
	we may assume that $a_n\to a\in \widetilde X$. 
	As a consequence, $b_n=\phi_{s_n(u_n)-u_n}(a_n)=\phi_{ \eps}(a_n)\to \phi_{\eps}(a)$.
	Furthermore, by \eqref{phis}, 
	\begin{equation*}
		d(\phi_{u_n}(x_n), \phi_{s_n(u_n)}(x_n))<\delta_n
	\end{equation*} 
	yields $b_n\to x$ and thus $\phi_{\eps}(a)=a$ contracting the property of $T_*$. The lemma is proved. 
\end{proof}
Now we are in a position to show  $(iii) \Rightarrow (i)$. 
Suppose that $\phi$ is kinematic expansive and $\fix(\phi)$ is open. 
Choose $\delta_1>0$ such that $B_{\delta_1}(x)=\{x\}$ for all $x\in \fix(\phi)$.
For all $\eps>0$, let $\delta_2=\delta_2(\eps)>0$ be an expansive constant for $\eps$, $\delta_3=\delta_3(\eps)$ as in Lemma \ref{deltalm}
and set $\delta=\min\{\delta_1,\delta_2,\delta_3\}/2$. Suppose $x,y\in X, s:\R\to\R$ is continuous, $s(0)=0$ such that
\begin{align}\label{kh1'}
	d(\phi_t(x),\phi_{s(t)}(x))<\delta\quad 
	\mbox{for all}\quad t\in\R, \\
	d(\phi_t(x),\phi_{s(t)}(y))<\delta \quad  \label{kh2'}
	\mbox{for all}\quad t\in\R.
\end{align} 
If $x\in \fix(\phi)$, then  $y=x$ due to  \eqref{kh2'}. 
If $x\notin\fix(\phi)$, then it follows from \eqref{kh1'} and Lemma \ref{deltalm} that 
\[|s(t)-t|<\eps \quad\mbox{for all}\quad t\in\R \]
and thus  $s:\R\to\R$ is a surjection. 
Furthermore, from  \eqref{kh1'} and \eqref{kh2'}, we have
\begin{equation}\label{stt}
	d(\phi_{s(t)}(x),\phi_{s(t)}(y))<2\delta<\delta_2\quad 
	\mbox{for all}\quad t\in\R
\end{equation}
and hence  
\[d(\phi_t(x),\phi_t(y))<\delta_2\quad \mbox{for all}\quad t\in\R  \]
is verified. 
Since $\phi$ is kinematic expansive, $y=\phi_\tau(x)$ for some $\tau\in (-\eps,\eps)$, which
shows that $\phi$ is KH-kinematic expansive. 
{\hfill$\Box$}

\begin{example}[Horocycle flow] \label{khex3}\rm  It is well-known that the horocycle on a compact surface of constant negative curvature is equivalent to a flow $\theta$ defined as follows. Recall the space $X=\Gamma\backslash\PSL(2,\R)=\{\Gamma g, g\in\PSL(2,\R)\}$ from Example \ref{varphiex}. Let
	$b_t=\{\pm B_t\}\in\PSL(2,\R)$ with $B_t={\scriptsize\begin{pmatrix}
			1&t\\0 &1
	\end{pmatrix}}\in\SL(2,\R),t\in\R$. Define a flow $\theta_t:X\to X$, $\theta_t(\Gamma g)=\Gamma g b_t$ for all $g\in\PSL(2,\R),t\in\R$. 
	According to \cite[Theorem 3.5]{hien19}, $\theta$ is kinematic expansive. Since $\theta$ has no fixed points (see  \cite[Theorem 3.9]{hien19}), it follows from Theorem \ref{khthm} that $\theta$ is  KH-kinematic expansive.
\end{example}

The next result presents another equivalent definition of KH-kinematic expansivity.
\begin{theorem}
	A flow $\phi$ is KH-kinematic expansive if and only if 
	for all $\eps>0$ there exists $\delta>0$ such that if
	$x,y\in X, s\in C$ satisfying
	\begin{equation*}
		\max\{d(\phi_t(x),\phi_{s(t)}(x)),	d(\phi_t(x),\phi_{s(t)}(y))\}<\delta\quad 
		\mbox{for all}\quad t\in\R,
	\end{equation*}
	then $d_\phi(x,y)<\eps$; recall the distance $d_\phi$ in Subsection \ref{3.3}.
\end{theorem}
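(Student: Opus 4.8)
The plan is to prove the equivalence by comparing the new, geometric conclusion $d_\phi(x,y)<\eps$ with the time-displacement conclusion ``$y=\phi_\tau(x)$ for some $\tau\in(-\eps,\eps)$'' of Definition \ref{khkedn}, using that $d_\phi$ merely records the diameter of an orbit segment joining $x$ and $y$. The forward implication is the substantive new estimate and is handled by uniform continuity; the reverse implication I would deliberately route through the intrinsic characterizations already available (Proposition \ref{kel} and Theorem \ref{khthm}), rather than attempt an illegitimate direct conversion.

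For the forward direction I would assume $\phi$ is KH-kinematic expansive and fix $\eps>0$. Since $\phi$ is uniformly continuous on $[-1,1]\times X$, there is $\eps_1\in(0,1)$ so that $d(\phi_a(z),\phi_b(z))<\eps/2$ whenever $z\in X$ and $a,b\in[-1,1]$ with $|a-b|<\eps_1$. I would then take $\delta$ to be the KH-kinematic expansive constant associated with $\eps_1$. If $x,y\in X$ and $s\in C$ satisfy the max-hypothesis with this $\delta$, then $y=\phi_\tau(x)$ for some $|\tau|<\eps_1$; choosing $z=x$ and the orbit segment with endpoints $0$ and $\tau$ gives $x,y\in\phi_{[0,\tau]}(x)$ (resp.\ $\phi_{[\tau,0]}(x)$), so $y\in\phi_\R(x)$ and hence $d_\phi(x,y)\le\diam(\phi_{[0,\tau]}(x))\le\eps/2<\eps$. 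This is the routine half.

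For the reverse direction I would extract from the new property exactly the two ingredients of part (iii) of Theorem \ref{khthm}. First, $\fix(\phi)$ is open: given $x\in\fix(\phi)$, apply the property with some $\eps<\diam(X)$, take the constant reparametrization $s\equiv0\in C$, and let $y$ be any point in the resulting $\delta$-ball of $x$; the max-hypothesis then holds trivially, its first entry being $d(\phi_t(x),x)=0$ and its second being $d(x,y)<\delta$, so $d_\phi(x,y)<\diam(X)$, which forces $y\in\phi_\R(x)=\{x\}$, i.e.\ $y=x$. Thus each fixed point is isolated and $\fix(\phi)$ is a union of open singletons, hence open. Second, $\phi$ is kinematic expansive: specializing $s(t)=t\in C$ collapses the max-hypothesis to $d(\phi_t(x),\phi_t(y))<\delta$ for all $t$ and yields $d_\phi(x,y)<\eps$, which is precisely the intrinsic characterization of kinematic expansivity supplied by Proposition \ref{kel}. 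With kinematic expansivity and openness of $\fix(\phi)$ established, the implication $(iii)\Rightarrow(i)$ of Theorem \ref{khthm} delivers that $\phi$ is KH-kinematic expansive.

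The main obstacle is conceptual and lives in the reverse direction: one must resist trying to recover $|\tau|<\eps$ from $d_\phi(x,y)<\eps$, since that implication is false in general (an orbit can traverse a set of arbitrarily small diameter over a long time near a fixed point, so geometric closeness does not bound temporal closeness). The decisive point is that Proposition \ref{kel} and Theorem \ref{khthm} are already phrased so as to absorb this gap, so the argument never needs the invalid step; the only mild checks remaining are that $s\equiv0$ and $s(t)=t$ are legitimate members of $C$, which they are.
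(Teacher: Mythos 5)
Your proposal is correct and follows essentially the same route as the paper: the reverse direction is exactly the paper's argument (specialize to $s(t)=t$ and invoke Proposition \ref{kel} for kinematic expansivity, run the $s\equiv 0$ argument of Proposition \ref{iso} to get openness of $\fix(\phi)$, then apply Theorem \ref{khthm}), and your forward direction is the standard uniform-continuity estimate that the paper delegates to \cite[Prop.~2.6]{artigue16}. You in fact fill in two details the paper glosses over --- the conversion of $|\tau|<\eps_1$ into $d_\phi(x,y)<\eps$ via a small orbit segment, and the observation that $d_\phi(x,y)<\diam(X)$ forces $y\in\phi_\R(x)=\{x\}$ at a fixed point --- both of which are handled correctly.
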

\begin{proof}
	($\Rightarrow$) The proof is similar to that of \cite[Prop. 2.6]{artigue16}.
	
	($\Leftarrow$) Taking $s(t)=t$ and using Proposition \ref{kel} we deduce that
	$\phi$ is kinematic expansive. Analogously to Proposition \ref{iso},
	$\fix(\phi)$ is open and hence $\phi$ is KH-kinematic expansive, owing to Theorem \ref{khthm}.	
\end{proof}

It seems natural to ask whether we can replace the hypothesis $s\in C$ by $s\in K$ in Definition \ref{khkedn}. 
The answer is as follows.
\begin{proposition}\it Let $\phi$ be a continuous flow on $X$. The following assertions are equivalent.
	
	(i) For all $\eps>0$ there exists $\delta>0$ such that if
	$x,y\in X, s\in C$ surjective satisfying
	\begin{equation*}
		\max\{d(\phi_t(x),\phi_{s(t)}(x)),	d(\phi_t(x),\phi_{s(t)}(y))\}<\delta\quad 
		\mbox{for all}\quad t\in\R,
	\end{equation*}
	then $y=\phi_\tau(x)$ for some $\tau\in(\eps,\eps)$.
	
	(ii) $\phi$ is kinematic expansive.
\end{proposition}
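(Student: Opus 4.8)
The plan is to establish the two implications separately, with the whole argument hinging on a single interplay: the identity reparameterization witnesses the forward direction, while the surjectivity hypothesis drives the reverse. I would first treat $(ii)\Rightarrow(i)$. Assume $\phi$ is kinematic expansive, fix $\eps>0$, let $\delta_0>0$ be a kinematic expansive constant for $\eps$, and set $\delta=\delta_0/2$. Suppose $x,y\in X$ and $s\in C$ is surjective with
\[\max\{d(\phi_t(x),\phi_{s(t)}(x)),\,d(\phi_t(x),\phi_{s(t)}(y))\}<\delta\quad\text{for all}\quad t\in\R.\]
By the triangle inequality, for every $t\in\R$,
\[d(\phi_{s(t)}(x),\phi_{s(t)}(y))\leq d(\phi_{s(t)}(x),\phi_t(x))+d(\phi_t(x),\phi_{s(t)}(y))<2\delta=\delta_0.\]
Now surjectivity enters: since $s$ maps $\R$ onto $\R$, as $t$ ranges over $\R$ the value $u=s(t)$ attains every real number, so the displayed bound reads $d(\phi_u(x),\phi_u(y))<\delta_0$ for all $u\in\R$. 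Kinematic expansivity then yields $y=\phi_\tau(x)$ for some $\tau\in(-\eps,\eps)$, establishing (i).

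For the reverse implication $(i)\Rightarrow(ii)$, assume (i) holds and fix $\eps>0$ with corresponding $\delta>0$. Suppose $x,y\in X$ satisfy $d(\phi_t(x),\phi_t(y))<\delta$ for all $t\in\R$. I would simply choose the identity reparameterization $s(t)=t$, which is continuous, satisfies $s(0)=0$, and is surjective, hence admissible in (i). Then $d(\phi_t(x),\phi_{s(t)}(x))=d(\phi_t(x),\phi_t(x))=0<\delta$ and $d(\phi_t(x),\phi_{s(t)}(y))=d(\phi_t(x),\phi_t(y))<\delta$, so the hypothesis of (i) is met and we conclude $y=\phi_\tau(x)$ for some $\tau\in(-\eps,\eps)$. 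Thus $\phi$ is kinematic expansive.

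The only delicate point — and what I expect to be the conceptual crux rather than a computational obstacle — is recognizing that surjectivity of $s$ is precisely the ingredient that transfers the bound on $d(\phi_{s(t)}(x),\phi_{s(t)}(y))$ from the range of $s$ to a bound valid at every real time $u$, which is exactly what kinematic expansivity demands. Without surjectivity the reparameterized orbit of $x$ could stall, leaving the comparison between $x$ and $y$ controlled only on $s(\R)\subsetneq\R$, and the conclusion would fail. This is the same gap that the open-fixed-point hypothesis closes in the unrestricted setting of Definition \ref{khkedn}: there one invokes Lemma \ref{deltalm} to force $|s(t)-t|<\eps$ and hence the surjectivity of $s$, whereas here surjectivity is imposed directly, which is why the present equivalence needs no assumption on $\fix(\phi)$.
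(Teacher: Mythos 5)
Your proof is correct and follows essentially the same route as the paper: the direction $(i)\Rightarrow(ii)$ via the identity reparameterization $s(t)=t$ (which the paper dismisses as ``clear''), and $(ii)\Rightarrow(i)$ by halving the expansive constant, applying the triangle inequality to get $d(\phi_{s(t)}(x),\phi_{s(t)}(y))<\delta_0$, and using surjectivity of $s$ to convert this into a bound at every time. Your closing remark correctly identifies why surjectivity here plays the role that Lemma \ref{deltalm} plays in the proof of Theorem \ref{khthm}.
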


\begin{proof}
	($\Rightarrow$) This is clear. 
	
	($\Leftarrow$) Suppose that $\phi$ is kinematic expansive. Let $\eps>0$ and $\delta$ a separating constant for $\eps$. 
	Set $\rho=\delta/2$. For $x,y\in X, s\in K$ such that 
	\begin{equation*}
		\max\{d(\phi_t(x),\phi_{s(t)}(x)),	d(\phi_t(x),\phi_{s(t)}(y))\}<\rho\quad 
		\mbox{for all}\quad t\in\R,
	\end{equation*}
then \[d(\phi_{s(t)}(x),\phi_{s(t)}(y))<2\rho=\delta\ \mbox{ for all }\ t\in \R .\]
	Since $s$ is surjective, 
	\[d(\phi_{t}(x),\phi_{t}(y))<\delta\ \mbox{ for all }\ t\in \R \]
	is verified. This implies $y=\phi_\tau(x)$ for some $|\tau|<\eps$. The proof is complete.	
\end{proof}

\begin{remark}\rm Analogously, if we set $s\in C$  surjective  in Definition \ref{khedn}, the definition is equivalent to that of separating flows. 
\end{remark}
\subsection{Strong KH-kinematic expansive flows}
Like kinematic expansivity, KH-kinematic expansivity is not an invariant under time change. Otherwise, KH-kinematic expansivity
would imply strong kinematic expansivity, which is not always true; see Table \ref{cte2}.
This motivates to consider the following expansivity.
\begin{definition}\rm We say that $\phi$ is {\em strong KH-kinematic expansive}
	if every time change of $\phi$ is   KH-kinematic expansive.
\end{definition}
Strong KH-expansivity is defined analogously.
The next result follows immediately from theorems \ref{khethm} and \ref{khthm}.
\begin{theorem}
	A flow is strong KH-kinematic expansive (resp. strong KH-expansive) if and only if 
	it is strong kinematic expansive (resp. strong separating)  and the set of its fixed points is open.
\end{theorem}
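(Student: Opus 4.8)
The plan is to reduce both statements to the characterizations already established in Theorem \ref{khthm} and Theorem \ref{khethm}, using the fact that both the fixed-point set and the property of its being open are invariant under time change. I will treat the two assertions in parallel.

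First I would record the key structural observation: if $\psi$ is any time change of $\phi$, then $\fix(\psi)=\fix(\phi)$. Indeed, by Definition \ref{tcdn} the orbits $\phi_\R(x)$ and $\psi_\R(x)$ coincide for every $x\in X$; hence $\phi_\R(x)=\{x\}$ if and only if $\psi_\R(x)=\{x\}$, which is exactly the statement $x\in\fix(\phi)\iff x\in\fix(\psi)$. In particular the condition ``$\fix(\psi)$ is open'' does not depend on which time change $\psi$ is chosen and is equivalent to ``$\fix(\phi)$ is open''.

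For the KH-kinematic assertion I would then unwind the definition: $\phi$ is strong KH-kinematic expansive means that every time change $\psi$ of $\phi$ is KH-kinematic expansive. By the equivalence $(i)\Leftrightarrow(iii)$ of Theorem \ref{khthm}, this holds if and only if every time change $\psi$ is kinematic expansive and $\fix(\psi)$ is open. By the previous paragraph the openness requirement collapses to the single condition that $\fix(\phi)$ is open, while the requirement that every time change be kinematic expansive is, by definition, precisely strong kinematic expansivity. Combining these two observations yields the claimed equivalence.

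The KH-expansive assertion is proved identically, with Theorem \ref{khthm} replaced by the equivalence $(i)\Leftrightarrow(ii)$ of Theorem \ref{khethm} and with ``kinematic expansive'' replaced by ``separating,'' so that ``every time change is separating'' becomes strong separating. I do not anticipate any genuine obstacle here: the only point needing justification is the time-change invariance of $\fix(\phi)$ and of its openness, and once that is in place both equivalences follow formally from the characterizations already available.
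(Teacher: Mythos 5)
Your proof is correct and takes essentially the same route as the paper, whose entire proof is the remark that the result follows immediately from Theorems \ref{khethm} and \ref{khthm}. The one detail you spell out that the paper leaves implicit---that $\fix(\psi)=\fix(\phi)$ for every time change $\psi$, so openness of the fixed-point set is time-change invariant and the universal quantifier over time changes distributes onto the two conditions---is exactly the right justification.
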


\begin{example}\rm \label{khgex}Recall the horocycle flow $\theta$ from Example \ref{khex3}. 
	
	(a)  The flow $\theta$  is strong KH-kinematic expansive. This can be proved similarly to  its strong kinematic expansivity in \cite[Theorem 3.9]{hien20}, or  this follows from the fact that $\theta$ is strong kinematic expansive  and the set of fixed points is empty. 
	
	(b) The flow $\theta$ is not geometric separating; see \cite[Remark 3.6(a)]{hien19}.
	Therefore it is not geometric expansive as well.
\end{example}

\begin{example}\label{kex3}\rm 
	Recall the flow $\phi$ in Example \ref{ccex}.
	It is clear that $\fix(\phi)=\varnothing$. Since $\phi$ is kinematic expansive, 
	$\phi$ is KH-kinematic expansive (and hence KH-expansive). 
	However, it is neither strong KH-expansive nor strong KH-kinematic expansive,
	which is due to the fact that it is not strong separating.
\end{example}
\subsection{KH-positive kinematic expansive flows}

Let us consider a new notion of expansive flows.
\begin{definition}\rm \label{khps}
	Let $(X,d)$ be a compact metric space. A continuous flow $\phi:\R \times X\to X$ is called \textit{KH-positive kinematic expansive} if for every $\eps>0$, there exists $\delta>0$ such that for $x,y\in X, s:[0,\infty)\to[0,\infty)$  continuous, $s(0)=0$ satisfying
	\begin{equation*}
		\max\{d(\phi_t(x),\phi_{s(t)}(x)),	d(\phi_t(x),\phi_{s(t)}(y))\}<\delta\quad 
		\mbox{for all}\quad t\in [0,\infty),
	\end{equation*} 
	then $y=\phi_\tau(x)$ for some $\tau\in (-\eps,\eps)$. 	
\end{definition}

The definitions of \textit{positive kinematic expansive flows} and \textit{KH-positive expansive flows} are analogous.

%
%

The following theorem is proved similarly to Theorem \ref{khthm}.
\begin{theorem}\label{pkhkthm} Let $\phi$ be a continuous flow on compact metric space $X$.
	Then $\phi$ is KH-positive kinematic expansive (reps. KH-positive expansive)
	if and only if $\phi$ is positive kinematic expansive (resp. separating) and $\fix(\phi)$ is open.
\end{theorem}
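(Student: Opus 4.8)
The plan is to mirror the proof of Theorem~\ref{khthm} (and of Theorem~\ref{khethm}), handling the KH-positive kinematic case and the KH-positive expansive case in parallel, since they differ only in whether one must control the size of the shift $\tau$. For the forward implication I would read off the two required properties separately. Specializing the test reparametrization to $s(t)=t$ in Definition~\ref{khps} makes the first term $d(\phi_t(x),\phi_{s(t)}(x))$ vanish, so the hypothesis degenerates to $d(\phi_t(x),\phi_t(y))<\delta$ for all $t\ge 0$, which is exactly positive kinematic expansivity (resp.\ positive separation). To obtain openness of $\fix(\phi)$ I would copy Proposition~\ref{iso}: for $x\in\fix(\phi)$ and $y\in B_\delta(x)$ the constant reparametrization $s\equiv 0$ gives $d(\phi_t(x),\phi_{s(t)}(x))=0$ and $d(\phi_t(x),\phi_{s(t)}(y))=d(x,y)<\delta$ for all $t\ge 0$, forcing $y=\phi_\tau(x)=x$; thus $B_\delta(x)=\{x\}$, every fixed point is isolated, and finiteness together with Lemma~\ref{fiso} yields that $\fix(\phi)$ is open.

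The real work is the reverse implication, and for it I would first establish a one-sided analogue of Lemma~\ref{deltalm}: \emph{if $\fix(\phi)$ is open, then for each $\eps>0$ there is $\delta>0$ such that whenever $x$ is regular and $s:[0,\infty)\to[0,\infty)$ is continuous with $s(0)=0$ and $d(\phi_t(x),\phi_{s(t)}(x))<\delta$ for all $t\ge 0$, one has $|s(t)-t|<\eps$ for all $t\ge 0$.} The proof would be by contradiction in the style of Lemma~\ref{deltalm}: pick $\eps<T_*$ with $T_*$ from Lemma~\ref{equilm}, sequences $\delta_n\to 0$, regular $x_n$ and admissible $s_n$ with $|s_n(t_n)-t_n|>\eps$ for some $t_n\ge 0$; applying the intermediate value theorem to $h_n(t)=s_n(t)-t$ on $[0,t_n]$ (where $h_n(0)=0$) produces $u_n\in(0,t_n)$ with $|h_n(u_n)|=\eps$, and after passing to a subsequence I may assume $h_n(u_n)=\eps$. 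Writing $a_n=\phi_{u_n}(x_n)$ and $b_n=\phi_{s_n(u_n)}(x_n)=\phi_\eps(a_n)$, compactness of the closed set $\widetilde X=X\setminus\fix(\phi)$ gives $a_n\to a\in\widetilde X$, while $d(a_n,b_n)<\delta_n\to 0$ forces $b_n\to a$; hence $\phi_\eps(a)=a$ with $a$ regular and $0<\eps<T_*$, contradicting Lemma~\ref{equilm}.

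With this lemma the reverse implication proceeds as in $(iii)\Rightarrow(i)$ of Theorem~\ref{khthm}. Given $\eps>0$ I would take $\delta_1$ with $B_{\delta_1}(x)=\{x\}$ for $x\in\fix(\phi)$, let $\delta_2$ be a positive kinematic expansive constant for $\eps$ (resp.\ a positive separating constant), let $\delta_3$ be produced by the one-sided lemma (for the separating case applied with any fixed $\eps_0<T_*$), and set $\delta=\tfrac12\min\{\delta_1,\delta_2,\delta_3\}$. If the KH-type hypothesis holds with $x\in\fix(\phi)$, then evaluating at $t=0$ gives $d(x,y)<\delta\le\delta_1$ and hence $y=x=\phi_0(x)$. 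If $x$ is regular, the lemma yields $|s(t)-t|<\eps$ (resp.\ $<\eps_0$) for all $t\ge 0$, so $s(t)\to\infty$; being continuous with $s(0)=0$, $s$ is therefore surjective onto $[0,\infty)$. The triangle inequality upgrades the two bounds to $d(\phi_{s(t)}(x),\phi_{s(t)}(y))<2\delta<\delta_2$ for all $t\ge 0$, and surjectivity of $s$ converts this into $d(\phi_r(x),\phi_r(y))<\delta_2$ for all $r\ge 0$; positive kinematic expansivity (resp.\ positive separation) then gives $y=\phi_\tau(x)$ with $\tau\in(-\eps,\eps)$ (resp.\ $\tau\in\R$), which is the desired conclusion.

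I expect the only genuine obstacle to be the one-sided analogue of Lemma~\ref{deltalm}, and within it the passage from the shift estimate to the surjectivity of $s$ onto $[0,\infty)$. This surjectivity is exactly what allows the estimate to be pulled back from the reparametrized times $s(t)$ to all positive times $r$, and it is the single place where the restriction to $t\ge 0$ interacts with the compactness argument; everything else is a routine transcription of the two-sided proofs, together with the bookkeeping needed to run the kinematic and the separating versions simultaneously.
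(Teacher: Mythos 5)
Your proof follows the same route the paper intends: the paper's own ``proof'' is essentially a pointer (``analogous to \cite[Prop.~2.9]{artigue18} and Theorem~\ref{khthm}''), and your elaboration --- the constant-reparametrization argument for openness of $\fix(\phi)$, the one-sided analogue of Lemma~\ref{deltalm} proved by the same compactness/IVT contradiction, and the surjectivity of $s$ onto $[0,\infty)$ feeding into the triangle-inequality step --- is exactly the transcription that analogy calls for. For the KH-positive \emph{kinematic} case your argument is complete and correct.

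In the ``resp.'' case, however, there is a mismatch between what you prove and what the theorem states. Your reverse implication takes $\delta_2$ to be a \emph{positive} separating constant, so what you actually prove is: KH-positive expansive $\Leftrightarrow$ positively separating and $\fix(\phi)$ open. The theorem as printed asserts the equivalence with two-sided separation (Definition~\ref{sdn}(iii)), and your argument cannot deliver that: from the KH hypothesis on $[0,\infty)$ you only obtain $d(\phi_r(x),\phi_r(y))<\delta_2$ for $r\ge 0$, and two-sided separation says nothing about pairs that stay close only in forward time. This gap is not repairable, because the printed statement is in fact false in that case: the geodesic flow $\varphi$ of Example~\ref{varphiex}(b) is $C$-expansive (hence separating) with $\fix(\varphi)=\varnothing$ open, yet the paper itself records (in the example following the definition of KH-kinematic bi-expansivity, citing \cite[Remark 3.6(b)]{hien19}) that $\varphi$ is not positively separating and hence not KH-positive expansive. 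Note that the forward implication of the printed statement does survive, since positive separation trivially implies separation; it is only the reverse that needs ``positively separating'' in place of ``separating.'' So your proof silently corrects the statement --- which is the right mathematics --- but you should flag that substitution explicitly rather than quietly assuming the stronger hypothesis.
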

\begin{proof}Note that for $x\in X$ and $t\in\R$, if $\phi_t(x)=x$, then $\phi_{-t}(x)=x$.
	This yields that  if $\phi_t(x)=x$ for all $t\in [0,\infty)$, then $x$ is a fixed point of $\phi$.  
	The proof of this theorem is analogous to that of \cite[Prop. 2.9]{artigue18} and  Theorem \ref{khthm}. 
\end{proof}

The next result is a characterization of KH-positive expansive flows on compact surfaces. 
\begin{proposition}\it
	Let $\phi$ be a continuous flow on a compact surface. Then $\phi$ is KH-positive kinematic expansive
	if and only if $\phi$ is positive kinematic expansive.
\end{proposition}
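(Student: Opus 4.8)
The plan is to reduce the statement to a question about fixed points and then exploit the two-dimensional topology. The forward implication is immediate: if $\phi$ is KH-positive kinematic expansive, then taking the reparametrization $s(t)=t$ in Definition~\ref{khps} makes the term $d(\phi_t(x),\phi_{s(t)}(x))=0$, so the hypothesis collapses to $d(\phi_t(x),\phi_t(y))<\delta$ for all $t\ge 0$ and the conclusion $y=\phi_\tau(x)$ with $|\tau|<\eps$ is exactly positive kinematic expansivity. For the reverse implication I would invoke Theorem~\ref{pkhkthm}, which asserts that $\phi$ is KH-positive kinematic expansive if and only if it is positive kinematic expansive and $\fix(\phi)$ is open. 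Hence it suffices to prove that a positive kinematic expansive flow on a compact surface automatically has open fixed-point set, which by Lemma~\ref{fiso} amounts to showing that every fixed point is isolated in $X$.

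First I would record that $\fix(\phi)$ is finite, a fact requiring no surface hypothesis. Fixing $\eps=1$ with a corresponding constant $\delta>0$, any two fixed points $p,q$ satisfy $d(\phi_t(p),\phi_t(q))=d(p,q)$ for all $t\ge 0$; if $d(p,q)<\delta$ then positive kinematic expansivity forces $q=\phi_\tau(p)=p$. Thus distinct fixed points are $\delta$-separated, and compactness of $X$ yields $|\fix(\phi)|<\infty$. In particular each fixed point $p$ admits a disk chart containing no other singularity, inside which I may take $B_\delta(p)$.

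The heart of the proof, and the step where two-dimensionality is indispensable, is to show that no fixed point $p$ can be accumulated by regular points. Suppose instead that $x_n\to p$ with $x_n\notin\fix(\phi)$. Comparing a regular orbit with the constant orbit at $p$ shows that no regular point can have its entire forward orbit contained in $B_\delta(p)$, since $d(\phi_t(y),p)<\delta$ for all $t\ge 0$ would give $y=\phi_\tau(p)=p$; hence every such orbit eventually exits $B_\delta(p)$, and by uniform continuity of the flow the first exit times $t_n$ of the $x_n$ tend to infinity. Passing to a limit of the exit points $z_n=\phi_{t_n}(x_n)$ produces a regular point $z\in\partial B_\delta(p)$ whose whole backward orbit is trapped in $\overline{B_\delta(p)}$, so its $\alpha$-limit set is a nonempty compact invariant set inside the planar chart whose only singularity is $p$. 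I would then apply the Poincar\'e--Bendixson theorem in that chart: $\alpha(z)$ must be $\{p\}$, a periodic orbit encircling $p$, or a graphic (homoclinic loop) through $p$. In each case the local phase portrait forces a confluence of distinct orbits that remain within the expansive constant of one another for all the relevant time, contradicting positive kinematic expansivity; this is precisely the phenomenon forbidden on a genuine surface but permitted on the branched manifold carrying the Lorenz flow of Example~\ref{lorenz}. I expect the main obstacle to be exactly this case analysis near $p$, together with the careful handling of the forward/backward asymmetry inherent in one-sided expansivity, where the backward trapping obtained above must be converted into a forward tracking on which Definition~\ref{khps} can act.
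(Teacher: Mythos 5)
Your reduction is exactly the paper's: the forward implication by taking $s(t)=t$, and the reverse via Theorem~\ref{pkhkthm}, so that everything hinges on showing that a positive kinematic expansive flow on a compact surface has open fixed-point set. The paper disposes of this in one line by citing \cite[Prop.~6.8]{artigue16}, which gives the stronger conclusion $\fix(\phi)=\varnothing$ for positive expansive flows on compact surfaces; your plan replaces that citation with a hand-made Poincar\'e--Bendixson argument showing that each fixed point is isolated. That is a legitimate strategy in principle (openness is all Theorem~\ref{pkhkthm} needs), but it means you have taken on the burden of essentially reproving Artigue's proposition.

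That replacement is where the gap is. Everything up to the construction of the point $z\in\partial B_\delta(p)$ whose backward orbit is trapped in $\overline{B_\delta(p)}$ is sound: the $\delta$-separation and finiteness of $\fix(\phi)$, the exit times $t_n\to\infty$, and the limit point $z$ all check out. But the concluding sentence --- that ``in each case the local phase portrait forces a confluence of distinct orbits that remain within the expansive constant of one another'' --- is an assertion, not an argument, and it is precisely the hard part. Positive kinematic expansivity constrains \emph{forward} orbits, while all the trapping you have produced is \emph{backward} in time. In the case $\alpha(z)=\{p\}$ the backward orbit of $z$ converges to $p$; this is the picture near a source, and nothing you have written exhibits two distinct points whose forward orbits stay $\delta$-close --- forward orbits near a source separate. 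In the case $\alpha(z)=\gamma$ a periodic orbit, the backward orbit of $z$ spirals onto $\gamma$, so forward in time the orbit of $z$ moves away from $\gamma$; the points of the spiral that do approach one another all lie on the same orbit, about which expansivity says nothing. Converting backward trapping into forward tracking on which Definition~\ref{khps} can act is exactly the asymmetry you yourself flag as ``the main obstacle,'' and it is left unresolved; as written, no contradiction is actually derived in any of the three Poincar\'e--Bendixson cases. Unless you close that case analysis, the honest fix is to do what the paper does and invoke \cite[Prop.~6.8]{artigue16} directly.
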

\begin{proof}
	It was shown in \cite[Prop. 6.8]{artigue16} that if $\phi$ is positive expansive on a compact surface,
	then $\fix(\phi)=\varnothing$. The proposition follows from Theorem \ref{pkhkthm}.
\end{proof} 

\begin{remark}\rm A $C$-positive kinematic expansive flow has at least one periodic orbit. More clearly,  the full space is the union of finite periodic orbits and fixed points (see Lemma 4.1 and Theorem 4.2 in \cite{artigue13}). In general, a KH-positive kinematic expansive flow may not have this property. One example is the horocycle flow $\theta$ in Example \ref{khex3} which has no periodic points and no fixed points. 
\end{remark}

Let $\phi$ be a continuous flow on a compact metric space $X$ and define the inverse
flow $\phi^{-1}$ as $\phi^{-1}_t=\phi_{-t}$.

\begin{definition}\rm 
	We say that $\phi$ is \textit{KH-kinematic bi-expansive} (resp. \textit{kinematic bi-expansive}) if $\phi$ and $\phi^{-1}$ are KH-positive kinematic expansive (resp. \textit{positive kinematic expansive}). 
\end{definition}

\begin{example} \rm 	
	(a) The horocyle flow $\theta$ defined in Example \ref{khex3}
	is KH-bi-kinematic expansive. The proof is analogous to that of \cite[Theorem 3.5]{hien19}.
	
	(b) Consider the flow $\varphi$ in Example \ref{varphiex} (b). It was shown in \cite[Remark 3.6(b)]{hien19} that both $\varphi$ and $\varphi^{-1}$ are not positive separating, hence they are not KH-positive expansive.
\end{example}

\begin{lemma}
	Each fixed point of a kinematic bi-expansive flow is an isolated point of the space. 
	Therefore, the set of its fixed points is open.
\end{lemma}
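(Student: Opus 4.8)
The plan is to prove by contradiction that every fixed point is an isolated point of $X$; once that is done, openness of $\fix(\phi)$ is immediate, since a ball $B_{\delta'}(x_0)=\{x_0\}$ around an isolated fixed point $x_0$ is automatically contained in $\fix(\phi)$.

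First I would unwind the hypothesis. By definition, $\phi$ kinematic bi-expansive means that $\phi$ and $\phi^{-1}$ (where $\phi^{-1}_t=\phi_{-t}$) are both positive kinematic expansive. Fixing $\eps=1$ and taking $\delta>0$ to be the smaller of the two corresponding expansive constants, I get two implications valid for all $u,v\in X$: if $d(\phi_t(u),\phi_t(v))<\delta$ for all $t\ge 0$, then $v=\phi_\tau(u)$ for some $|\tau|<1$; and if $d(\phi_{-t}(u),\phi_{-t}(v))<\delta$ for all $t\ge 0$, then $v=\phi_\tau(u)$ for some $|\tau|<1$. Since a fixed point $x_0$ satisfies $\phi_t(x_0)=x_0$ for all $t$, specialising $v=x_0$ yields the key principles I will use: a point whose forward orbit stays within distance $\delta$ of $x_0$ forever must equal $x_0$, and likewise for the backward orbit.

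Now suppose $x_0\in\fix(\phi)$ is not isolated and choose $y_n\to x_0$ with $y_n\ne x_0$. Set $g_n(t)=d(\phi_t(y_n),x_0)$. Because $y_n\ne x_0$, the forward principle (applied at threshold $\delta/2<\delta$) prevents $g_n$ from staying below $\delta/2$ for all $t\ge 0$; as $g_n(0)=d(y_n,x_0)\to 0$, continuity yields a finite first hitting time $\tau_n=\inf\{t\ge0:g_n(t)=\delta/2\}$, with $g_n<\delta/2$ on $[0,\tau_n)$ and $g_n(\tau_n)=\delta/2$. Uniform continuity of $\phi$ on the sets $[0,T]\times X$ forces $\tau_n\to\infty$, since for any $T$ and all large $n$ one has $g_n<\delta/2$ on $[0,T]$. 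Put $a_n=\phi_{\tau_n}(y_n)$, so that $d(a_n,x_0)=\delta/2$ and $d(\phi_{-s}(a_n),x_0)=g_n(\tau_n-s)\le\delta/2$ for $s\in[0,\tau_n]$. By compactness I pass to a subsequence with $a_n\to a$; then $d(a,x_0)=\delta/2\ne 0$. For each fixed $s\ge 0$ one has $\tau_n>s$ eventually, so continuity of the flow gives $d(\phi_{-s}(a),x_0)\le\delta/2<\delta$ for all $s\ge 0$. The backward principle then forces $a=x_0$, contradicting $d(a,x_0)=\delta/2$. Hence $x_0$ is isolated, and openness of $\fix(\phi)$ follows.

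I expect the main obstacle to be the interplay between strict and non-strict inequalities at the exit boundary: the point $a_n$ sits exactly at distance $\delta/2$ and its backward orbit is only known to stay $\le\delta/2$, whereas the expansivity principles require a \emph{strict} bound $<\delta$. The device resolving this is to run the exit-time construction at threshold $\delta/2$ while reserving the full constant $\delta$ for the final application of expansivity, so that $\le\delta/2<\delta$ is genuinely strict in the limit. It is also worth flagging that both halves of bi-expansivity are used in an essential way: the positive expansivity of $\phi$ guarantees that the exit time $\tau_n$ is finite, while the positive expansivity of $\phi^{-1}$ delivers the contradiction at the limit point $a$.
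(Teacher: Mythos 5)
Your proof is correct. Note, however, that the paper does not actually prove this lemma: it simply defers to Artigue \cite[Prop.\ 6.16]{artigue16}, so your argument supplies a complete, self-contained proof of a statement the paper only cites. Your route is the natural (and essentially standard) one for bi-expansive systems: positive kinematic expansivity of $\phi$ shows that the ``local stable set'' of a fixed point $x_0$ is trivial, which forces finite exit times $\tau_n$ for a sequence $y_n\to x_0$, $y_n\neq x_0$; uniform continuity on $[0,T]\times X$ gives $\tau_n\to\infty$; compactness produces a limit exit point $a$ with $d(a,x_0)=\delta/2$ whose entire backward orbit stays within $\delta/2$ of $x_0$; and positive kinematic expansivity of $\phi^{-1}$ then forces $a=x_0$, a contradiction. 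All the delicate points are handled correctly: the specialization $v=x_0$ really does yield $u=x_0$ (since $\phi_\tau(x_0)=x_0$ for every $\tau$, the ambiguity $|\tau|<1$ in the conclusion is harmless), the strict-versus-nonstrict inequality issue at the exit boundary is resolved by running the construction at threshold $\delta/2$ while invoking expansivity at $\delta$, and both halves of bi-expansivity are used in an essential and clearly identified way. One small remark: your final step, that isolation of every fixed point implies openness of $\fix(\phi)$, needs no appeal to finiteness (a union of open singletons is open), whereas the paper's Lemma on finite sets is only needed for the converse direction; your phrasing gets this right.
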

For a proof, see \cite[Prop. 6.16]{artigue16}. Using the above lemma and Theorem \ref{pkhkthm} 
we obtain the following result.
\begin{theorem}
	A flow is KH-kinematic bi-expansive if and only if 
	it is kinematic bi-expansive.
\end{theorem}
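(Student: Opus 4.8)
The plan is to reduce the statement to Theorem \ref{pkhkthm} applied separately to $\phi$ and to its inverse $\phi^{-1}$. Recall that Theorem \ref{pkhkthm} characterizes KH-positive kinematic expansivity as the conjunction of positive kinematic expansivity and openness of the fixed point set. Since KH-kinematic bi-expansivity and kinematic bi-expansivity are, by definition, the requirement that the corresponding positive property hold for both $\phi$ and $\phi^{-1}$, the theorem will follow once we match these packaged conditions. The crucial preliminary observation is that $\fix(\phi)=\fix(\phi^{-1})$: a point $x$ satisfies $\phi_t(x)=x$ for all $t\in\R$ if and only if $\phi_{-t}(x)=x$ for all $t\in\R$, so the two flows share the same singular set, and in particular the openness of one set is equivalent to the openness of the other.

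For the forward implication I would assume $\phi$ is KH-kinematic bi-expansive, so that both $\phi$ and $\phi^{-1}$ are KH-positive kinematic expansive. Applying Theorem \ref{pkhkthm} to each flow yields, in particular, that $\phi$ is positive kinematic expansive and that $\phi^{-1}$ is positive kinematic expansive (the openness clauses being extracted but not needed here). By definition this is exactly kinematic bi-expansivity. For the reverse implication I would assume $\phi$ is kinematic bi-expansive, meaning $\phi$ and $\phi^{-1}$ are both positive kinematic expansive. The only ingredient still required to invoke Theorem \ref{pkhkthm} is openness of the fixed point sets, and this is supplied precisely by the lemma immediately preceding: a kinematic bi-expansive flow has isolated — hence, by finiteness, open — fixed points. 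Because $\fix(\phi^{-1})=\fix(\phi)$ is then open as well, Theorem \ref{pkhkthm} upgrades both $\phi$ and $\phi^{-1}$ to KH-positive kinematic expansive, which is the definition of KH-kinematic bi-expansivity.

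There is no genuine obstacle here: the result is a formal bookkeeping consequence of combining Theorem \ref{pkhkthm} with the openness lemma stated above. The only point deserving a moment of care is the identity $\fix(\phi)=\fix(\phi^{-1})$, which guarantees that the openness established for one flow transfers to the other, so that both halves of the bi-expansivity hypothesis can be fed into Theorem \ref{pkhkthm} simultaneously rather than one at a time.
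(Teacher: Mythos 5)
Your proposal is correct and follows essentially the same route as the paper: the paper also derives this theorem directly from Theorem \ref{pkhkthm} combined with the preceding lemma that fixed points of a kinematic bi-expansive flow are isolated (hence $\fix(\phi)$ is open). Your explicit remark that $\fix(\phi)=\fix(\phi^{-1})$, so that openness transfers between the flow and its inverse, is a small point the paper leaves implicit, but it is the same argument.
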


\subsection{Hierarchy of expansive flows}

The hierarchy of expansive  flows is illustrated in Table \ref{tablaExp}. 

\begin{table}[h]
	\[
	\begin{array}{ccc}
		\hbox{\fbox{\quad $C$-expansivity\quad }} &  \Longrightarrow & \hbox{\fbox{\ $C$-separation \ }} \\
		\big\Downarrow && \big\Downarrow\\
		\hbox{\fbox{Strong KH-kinematic expansivity}} & \Longrightarrow & \hbox{\fbox{\ Strong KH-expansivity\ }}\\
		\big\Downarrow &&\big\Downarrow \\
		\hbox{\fbox{KH-kinematic expansivity}} & \Longrightarrow & \hbox{\fbox{\ KH-expansivity\ }}\\
		\big\Downarrow && \big\Downarrow\\
		\hbox{\fbox{Kinematic expansivity}} & \Longrightarrow & \hbox{\fbox{ \quad Separation\quad }}\\
	\end{array}
	\]
	\caption{Hierarchy of expansive flows}
	\label{tablaExp}
\end{table}

\begin{figure}[ht]
	\begin{center}
		\begin{minipage}{0.5\linewidth}
			\centering
			\includegraphics[angle=0,width=0.5\linewidth]{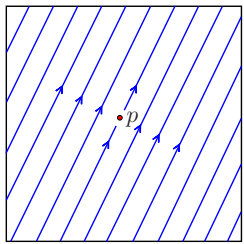}
		\end{minipage}
	\end{center}
	\caption{Strong kinematic expansive flow in the 2-torus with a fake saddle}\label{if}
\end{figure}

Let us consider some examples.

\begin{example}\label{khex2} \rm Consider an irrational flow on the 2-torus ${\mathbb T}^2=\R^2/\Z^2$ with velocity field $X$. 
	We take any non-negative smooth function $f$ with
	just one zero at some point $p$ in the torus. Denote by $\phi$ the flow generated by the vector
	field $f X$; see Figure \ref{if} for an illustration. Then $\phi$ is (strong) kinematic expansive; see \cite[Example 2.8]{artigue16}. However, due to ${\rm fix}(\phi)=\{p\}$, which is a  non-open set,
	$\phi$ is not KH-kinematic expansive. 
\end{example}

\begin{example}\label{abcex} \rm Consider the separating but not expansive homeomorphism $\sigma$ on $X$
	in Example \ref{he}. Let $f: X\to [0,\infty)$ be a constant function.
	By Theorem \ref{constantf} and Remark \ref{srm}, $\sus^{\sigma,f}$ is $C$-separating but not kinematic expansive
	and hence it is not KH-kinematic expansive. 
	In addition, due to Proposition \ref{sfp} $\sus^{\sigma,f}$ has no fixed points.
	This shows that $\sus^{\sigma,f}$ is strong KH-expansive.  
\end{example}
Table \ref{cte1} below recalls counterexamples for the hierarchy in Table \ref{tablaExp}.
\begin{table}[h!]\noindent
	\[
	\begin{array}{ccc}
		\hbox{\fbox{\quad $C$-expansivity\quad }}        & \begin{array}{c}\centernot\Longleftarrow\\ \hbox{Ex. \ref{abcex} }  \end{array} &  \hbox{\fbox{\ $C$-separation \ }}  \\
		\big\nUparrow \hbox{\hskip-0.25cm Ex. \ref{khgex} } && \big\nUparrow \hbox{\hskip-0.25cm Ex. \ref{khgex} }\\
		\hbox{\fbox{Strong KH-kinematic expansivity}} & \begin{array}{c}\centernot\Longleftarrow\\ \hbox{Ex. \ref{abcex} } \end{array} & \hbox{\fbox{Strong KH-expansivity}}\\
		\big\nUparrow \hbox{\hskip-0.25cm Ex. \ref{kex3} } && \big\nUparrow \hbox{\hskip-0.25cm Ex. \ref{kex3} }\\
		\hbox{\fbox{KH-kinematic expansivity}} & \begin{array}{c}\centernot\Longleftarrow\\ \hbox{Ex. \ref{abcex} } \end{array} & \hbox{\fbox{\ KH-expansivity\ }}\\
		\big\nUparrow \hbox{\hskip-0.25cm Ex. \ref{khex2} } && \big\nUparrow \hbox{\hskip-0.25cm Ex. \ref{khex2} }\\
		\hbox{\fbox{ Kinematic expansivity }}        & \begin{array}{c}\centernot\Longleftarrow\\ \hbox{Ex. \ref{abcex}} \end{array} & \hbox{\fbox{\qquad Separation\qquad}}\\
	\end{array}
	\]
	\caption{Diagram of counterexamples 1}
	\label{cte1}	
\end{table}
\begin{remark}
	\rm (i) All expansive (and separating) flows introduced in this paper have finite sets of fixed points. 
	Due to Lemma \ref{iso}, the set of fixed points is open if and only if each fixed point is an isolated point.
	The classes of expansive and separating flows having open fixed points sets consist of
	$C$-expansive flows, (strong) KH-kinematic expansive flows, (strong) KH-expansive flows and $C$-separating flows.

	(ii) Expansive properties   which are invariant properties under time change of flows include
	$C$-expansivity, geometric expansivity, strong KH-kinematic expansivity, strong KH-expansivity, 
	strong kinematic expansivity, $C$-separation,  geometric separation, and strong separation. 
	
	(iii) Expansive flows having kinematic expansive properties are $C$-expansive, 
	geometric expansive, (strong) KH-kinematic expansive, and (strong) kinematic expansive flows.
	
\end{remark}
The next table recalls counterexamples of some other expansive and separating flows.

\begin{table}[h!]
	\[\begin{array}{ccc}
		\hbox{\fbox{Geometric expansivity}} &\begin{array}{c}\centernot\Longrightarrow\\ \hbox{Ex. \ref{lorenz}}
		\end{array}  & \hbox{\fbox{KH-expansivity}} \\
		\mbox{Ex. \ref{lorenz}}\ \big\nDownarrow\hskip-0.20cm\big\nUparrow\hbox{\hskip-0.25cm Ex. \ref{khgex}} &&
		\mbox{Ex. \ref{kex3}}\ \big\nDownarrow\hskip-0.20cm\big\nUparrow\hbox{\hskip-0.25cm Ex. \ref{abcex}} \\
		\hbox{\fbox{Strong KH-kinematic expansivity}} & 
		\begin{array}{c}\centernot\Longleftarrow\\ \hbox{Ex. \ref{abcex}}
		\end{array} 
		& \hbox{\fbox{Strong separation}}\\
		\mbox{Ex. \ref{khgex}\ } \big\nDownarrow\hskip-0.2cm\big\nUparrow\hbox{\hskip-0.25cm Ex. \ref{abcex}} && \mbox{Ex. \ref{abcex}\ } \big\nDownarrow\hskip-0.2cm \big\nUparrow\hbox{\hskip-0.25cm Ex. \ref{kex3}}\\
		\hbox{\fbox{ $C$-separation}} & \begin{array}{c}\centernot\Longleftarrow\\ \hbox{\hskip-0.25cm Ex. \ref{khex2} } \end{array} & \hbox{\fbox{ Kinematic expansivity}}
	\end{array}\]
	\caption{Diagram of counterexamples 2}
	\label{cte2}
\end{table}
The counterexamples are based on the property of fixed points sets, the strong and `kinematic' properties (see previous remark).
Let us explain the left column in Table \ref{cte2}.
Due to the fact that the fixed points set of a (strong) KH-kinematic expansive is open, whereas that of a geometric expansive flow may be non-open,
one of KH-kinematic expansivity and geometric expansivity does not imply the other. 
The same occurs to strong KH-kinematic expansivity and $C$-separation, which is since a strong KH-kinematic expansive flow may not a conjugacy invariant
and a $C$-separating flow may not have kinematic expansivity. 
The right column in Table \ref{cte2} can be explained analogously.

\noindent\textbf{Summary of counterexamples:}
We have presented several  variations of expansive and separating flows on compact metric spaces. In Table \ref{cte1} we recall the counterexamples in the hierarchy in Table \ref{tablaExp}. The  concepts of expansivity and separation in Table \ref{cte1} are not equivalent  in the general context of continuous flows on compact metric spaces. 
Table \ref{cte2} recalls the counterexamples in the hierarchy of some other expansive and separating properties. 
These may be seen as a supplement of the hierarchy provided by Artigue in \cite{artigue16}. 

The following questions arise naturally.

\noindent\textbf{Question 1.} \textit{Do geometric separation and kinematic expansivity imply geometric expansivity?}

\noindent\textbf{Question 2.} \textit{Do strong separation and kinematic expansivity imply strong kinematic expansivity?}

\noindent{\bf Acknowledgments.}  This research is funded by Vietnam National Foundation for Science and Technology Development (NAFOSTED) under grant number 101.02-2020.21. This paper was done during the author's stay at Vietnam Institute
for Advanced Study in Mathematics (VIASM). He would like to thank VIASM for its wonderful
working condition.   The author enjoyed many fruitful discussions with  Nguyen Bao Tran.

\end{document}